 \newtheorem{thm}{Theorem}[section]
 \newtheorem{cor}[thm]{Corollary}
 \newtheorem{lem}[thm]{Lemma}
 \newtheorem{prop}[thm]{Proposition}
\newtheorem*{pr}{Proof of the theorem}
\theoremstyle{definition}
 \newtheorem{defn}[thm]{Definition}
 \theoremstyle{remark}
 \newtheorem{rem}[thm]{Remark}
 \numberwithin{equation}{section}
\begin{document}

\title[Solutions of Nonlinear DE]
 {Solutions of Nonlinear Differential Equations}
%----------Author 1
\author[N.~Bedziuk]{Nadzeya Bedziuk}

\address{%
Department of Functional Analysis\\
Belarusian State University\\
Nezavisimosti 4\\
220050 Minsk\\
Belarus}

\email{nbedyuk@gmail.com}

%\thanks{This work was completed with the support of our
%\mathbf{T}eX-pert.}

%----------Author 2
\author[A.~Yablonski]{Aleh Yablonski}
\address{Department of Functional Analysis\\
Belarusian State University\\
Nezavisimosti 4\\
220050 Minsk\\
Belarus}

\email{yablonski@bsu.by}
%----------classification, keywords, date
\subjclass{Primary 34A36; Secondary 46F30}

\keywords{Algebra of new generalized functions, differential
equations with generalized coefficients, functions of finite
variation}

\date{July 15, 2008}
%----------additions
%\dedicatory{To my boss}
%%% ----------------------------------------------------------------------

\begin{abstract}
We consider an ordinary nonlinear differential equation with
generalized coefficients as an equation in differentials in
algebra of new generalized functions. Then the solution of such
equation will be a new generalized function. In the article we
formulate necessary and sufficient conditions when the solution of
the given equation in algebra of new generalized functions is
associated with an ordinary function. Moreover the class of all
possible associated functions was described.\end{abstract}

%%% ----------------------------------------------------------------------
\maketitle
%%% ----------------------------------------------------------------------
%\tableofcontents
\section{Introduction}
The dynamics of many real systems or phenomena can be described by
nonlinear differential equations with generalized coefficients.
Unfortunately the theory of generalized functions allows only to formulate such equations and it is inapplicable to
solution of them. Therefore different interpretations of the
solution of nonlinear differential equations were proposed by many
mathematicians. In general, different interpretations of the same
equation lead to different solutions. See, e.g.,
\cite{A,DS,F,Li,P,Z}.  In order to choose an adequate interpretation
of the given equation one has to consider the reasons that are used
for modelling the dynamics of the real system.

In this paper we will consider the following nonlinear equation with generalized coefficients
\begin{equation}
\label{eq: 1}
 \dot{x}(t)=f(t,x(t))\dot{L}(t),
\end{equation}
where $ t \in [a;b] \subset \mathbb{R}$, $\dot{L}(t)$ is a derivative in the distributional sense. Generally, since $\dot{L}(t)$ is a distribution
and function $f(t,x(t))$ is not smooth then the product $f(t,x(t))\dot{L}(t)$ is not well defined and the solution of the equation (\ref{eq: 1})
essentially depends on the interpretation.

We will investigate equation (\ref{eq: 1}) by using the algebra of mnemofunctions (new generalized functions). It is worth mentioning that the first algebra of new generalized functions was proposed by J.F.~Colombeau in \cite{C}. Definitions of other algebras one can find in
\cite{E, R}. The general methods of construction of such algebras were proposed by A.B.~Antonevich and Ya.V.~Radyno in \cite{AR}.

In this paper we interpret equation (\ref{eq: 1}) as an equation
in differentials in algebra of new generalized functions from
\cite{L}. Such approach allows us to investigate ordinary and
stochastic differential equations in the unique way \cite{LSY,LY}.
Algebraic interpretation states that solution of the equation
(\ref{eq: 1}) is an element of algebra of new generalized
functions.
%If this new generalized function is associated to some
%ordinary function, then we call it a solution of the equation
%(\ref{eq: 1}).
Naturally a problem arises to find conditions on coefficients of
equation in differentials which allow us to associate an ordinary
function with the solution of this equation. If such an ordinary function
exists then we call it solution of the equation (\ref{eq: 1}). The
sufficient conditions when the solution exists and the set of
possible solutions in this sense were described in article
\cite{YN}. In this article we will prove necessary conditions
for some class of coefficients.

\section{The algebra of mnemofunctions}
In this section we recall the definition of the algebra of mnemofunctions from \cite{L}, see also \cite{LSY} and
\cite{YN}.

At first we define an extended real line $\widetilde{\mathbb{R}} $ using a construction typical for non-standard analysis. Let $\overline{\mathbb{R}}
= \{(x_n)_{n=1}^\infty : x_n \in \mathbb{R} $ for all $n \in \mathbb{N} \}$ be a set of real sequences. We will call two sequences $\{x_n\} \in
\overline{\mathbb{R}}$ and $\{y_n\} \in \overline{\mathbb{R}}$  equivalent if there is a natural number $N$ such that $x_n =y_n$ for all $n > N$. The
set $\widetilde{\mathbb{R}}$ of equivalence classes will be called the extended real line and any of the classes a generalized real number.

It is easy to see that $\mathbb{R} \subset \widetilde{\mathbb{R}}$ as one may associate with any ordinary number $x \in \mathbb{R}$ a class
containing a stationary sequence $x_n = x$. The product $\widetilde x \widetilde y$
of two generalized real numbers is defined as the class of sequences equivalent to the sequence $\{x_n y_n\}$,  where $\{x_n\}$ and $\{y_n\}$ are the
arbitrary representatives of the classes $\widetilde x$ and $ \widetilde y$ respectively. It is evident that $\widetilde{\mathbb{R}}$ is an algebra. For any segment $\mathbf{T} =[a;b] \subset \mathbb{R}$ one can construct an extended segment $\widetilde{\mathbf{T}}$ in a similar way.

Consider the set of sequences of infinitely differentiable functions $\{f_n(x)\}$ on $\mathbb{R}$. We will call two sequences $\{f_n (x)\}$ and
$\{g_n (x)\}$ equivalent if there is a natural number $N$ such that $f_n (x) = g_n (x)$ for all $n > N$ and $x \in \mathbb{R}$. The set of classes of
equivalent functions is denoted by $ \mathcal{G} (\mathbb{R})$ and its elements are called mnemofunctions. Similarly one can define the space
$\mathcal{G} (\mathbf{T})$ for any interval $\mathbf{T}=[a;b]$. If we endow all these spaces with natural operations of addition and multiplication then they become algebras.

For each distribution $f$ we can construct a sequence $f_n$ of smooth functions such that $f_n$ converges to $f$ (e.g. one can consider the
convolution of $f$ with some $\delta$-sequence). This sequence defines the mnemofunction which corresponds to the distribution $f$. Thus the space of
distributions is a subset of the algebra of mnemofunctions. However, in this case, infinite set of mnemofunctions correspond to one distribution
(e.g., by taking different $\delta$-sequences). We will say that mnemofunction $\widetilde f = [\{f_n\}]$ is associated to a function $f$ from some topological space if $f_n$ converges to $f$ in this space.

Let $\widetilde f =[\{f_n(x)\}] $ and $\widetilde g =[\{g_n(x)\}] $ be mnemofunctions. Then there is a composition defined
$\widetilde f \circ \widetilde g = [\{f_n(g_n(x))\}]\in \mathcal{G} (\mathbb{R})$. In the same way one can define the value of mnemofunction
$\widetilde f $ at the generalized real point $\widetilde x =[\{x_n\}] \in \widetilde{\mathbb{R}}$ as $\widetilde f(\widetilde x) = [\{f_n(x_n)\}]$.

Let $H$
denote the subset of $\widetilde{\mathbb{R}} $ of nonnegative ``infinitely small numbers'':
$$H=\{ \widetilde{h} \in \widetilde{\mathbb{R}} :
\widetilde{h} = [\{h_n\}],\ h_n >0,  \lim h_n =0 \}.$$
For each $\widetilde h=[\{h_n\}] \in H $ and $\widetilde f = [\{f_n(x)\}] \in \mathcal{G} (\mathbb{R})$ we define a differential
${\mathrm d}_{\widetilde h} \widetilde f \in \mathcal{G}(\mathbb{R})$ by ${\mathrm d}_{\widetilde h} \widetilde f = [\{f_n(t + h_n) - f_n(t)\}]$. The construction
of the differential was proposed by N.V.~Lazakovich (see \cite{L}).

\section{Main results}
In this section we will formulate main results of this article.

Using introduced algebras now we can give an interpretation of equation (\ref{eq: 1}).
%Let $L(t)$, $t \in [a;b]=\mathbf{T}$ be a right-continuous function of finite variation.
We replace ordinary functions in equation (\ref{eq: 1}) by corresponding mnemofunctions  and then
write the algebra's differentials. So we have
\begin{equation}
  \label{eq: diff}
 {\mathrm d}_{\widetilde h } \widetilde X(\widetilde t) = \widetilde f(\widetilde t, \widetilde X(\widetilde t)) {\mathrm d}_{\widetilde h }\widetilde L(\widetilde t),
\end{equation}
with initial value $\widetilde X |_{[\widetilde a; \widetilde h)}=\widetilde X^0$, where $\widetilde h=[\{h_n\}] \in H$, $\widetilde a = [\{a\}] \in
\widetilde{\mathbf{T}}$, $\widetilde t=[\{t_n\}] \in \widetilde{\mathbf{T}}$, $ \widetilde X = [\{X_n\}],\ \widetilde f=[\{f_n\}],\ \widetilde X^0
=[\{X^0_n\}],\ \widetilde L= [\{L_n\}]$ are elements of $\mathcal{G}(\mathbb{R})$.
Moreover $\widetilde f$ and $\widetilde L$ are associated to $f$ and $L$ respectively. If $\widetilde X$ is associated to some function $X$
then we say that $X$ is a solution of the equation (\ref{eq: 1}).

As it was shown in \cite{LSY} there exists a unique solution of equation (\ref{eq: diff}).
The purpose of the present paper is to investigate when the solution $\widetilde X$ of the equation (\ref{eq: diff}) converges to some ordinary function and to describe all possible limits.
%is associated to some function and to describe all possible associated solutions.

Let $L(t)$, $t \in \mathbf{T} =[a;b]$ be a right-continuous function of a finite variation. We will assume that $L(t)=L(b)$ if $t > b$ and $L(t)=L(a)$
if $t<a$. Denote by $V_u^v L$ the total variation of function $L$ on the interval $[u;v] \subset \mathbf{T}$. Suppose that $f$ is Lipschitz
continuous function of a bounded growth, i.e. there exists constant $K$ such that for all $x \in \mathbb{R}$ and $t \in \mathbf{T}$
\begin{equation}
\label{eq: fg}
|f(t,x)|\le K (1+|x|).
\end{equation}

Consider the following convolutions with
$\delta$-sequence as representatives of mnemofunction  $\widetilde L$ from equation (\ref{eq: diff}):
\begin{equation} \label{eq: Ln}
L_n (t) = (L*\rho_n)(t)=\int_0^{1/n} L(t+s) \rho _n (s) {\mathrm d}s,
\end{equation}
where $\rho_n \in C^\infty (\mathbb{R})$, $\rho_n \ge 0$, $\mathrm{supp} \ \rho_n \subseteq [0;1/n]$ and $\int_0^{1/n} \rho_n (s){\mathrm d}s =1$.

In the same way we set
\begin{equation} \label{eq: fn}
f_{n}(t,x)=(f*\widetilde{\rho}_{n})(t,x)=\int_{[0,1/n]^{2}}f(t+u, x+v)\widetilde{\rho}_{n}(u,v){\mathrm d}u  {\mathrm d}v,
\end{equation}
where $\widetilde{\rho}_{n}\in C^{\infty}(\mathbb{R}^{2})$, $\widetilde{\rho}_{n} \geq 0$, $\mathrm{supp}\ \widetilde{\rho}_{n} \subseteq [0,
1/n]^{2}$, $ \int_{[0,1/n]^{2}}\widetilde{\rho}_{n}(u,v){\mathrm d}u {\mathrm d}v=1$.

By using representatives we can rewrite equation (\ref{eq: diff}) in the following form:
\begin{equation}
  \label{eq:representetieves system}
\left\{
\begin{array}{l}
 x_n(t+h_n)-x_n(t)=f_n(t,x_n(t))(L_n(t+h_n)-L_n(t)),\\
 x_n|_{[a;a+h_n)}(t)= x_n^0 (t).
\end{array}
\right.
\end{equation}

The solution $\widetilde x$ of the equation (\ref{eq: diff}) is associated with some function if and only if the sequence of the solutions $x_n$ of the
equation (\ref{eq:representetieves system}) converges. Therefore we have to investigate the limiting behavior of the sequence $x_n$.

Let $t$ be an arbitrary point of $\mathbf{T}$. There exists $m_t \in \mathbb{N}$ and $\tau_t \in [a;a+h_n)$ such that $t = \tau_t + m_t h_n$. Set
$t_k = \tau_t + k h_n$, $k = 0,1,\dots, m_t$. Then the solution of the equation (\ref{eq:representetieves system}) can be written as
\begin{equation}
\label{eq:representetieves explicit}
x_n(t)=x_n^0(\tau_t )+\sum_{k=0}^{m_t-1} f_n(t_k,x_n(t_k))(L_n(t_{k+1})-L_n(t_k)).
\end{equation}

Consider the function $F_n : [-\infty; +\infty] \to [0;1]$ given by
\begin{equation}
\label{eq: Fn}
F_n(x)=\int_x^{1/n} \rho_n(s){\mathrm d} s.
\end{equation}
Since $\rho_n (s) \ge 0$, then $F_n$ is a non-increasing function, $0 \le F_n (x) \le 1$ and $F_n(+ \infty) = 0$, $F_n(-\infty) = 1$. Denote by
$F_n^{-1}$ the inverse function of $F_n$, i.e., $F_n^{-1}: [0;1] \to [-\infty; +\infty]$ and
\begin{equation}
\label{eq: Fn-1}
F_n^{-1}(u)=\sup\left\{x:F_n(x)=u\right\}.
\end{equation}

In order to describe the limits of the sequence $x_n$ we consider the integral equation
\begin{equation}
\label{eq:limit int}
x(t) = x^0 + \int_a^t f(s,x(s)) {\mathrm d}L^c(s) + \sum_{a < s \le t} \Bigl(\varphi(\Delta L(s)f, x(s-), 1) - x(s-) \Bigr),
\end{equation}
where $t \in\mathbf{T}$, $L^c$ is a continuous part of the function $L$, $\Delta L(s) = L(s+) - L(s-)$ is a size of the jump of the function $L$ at
the epoch $s$, and for any function $z$ and $x\in\mathbb{R}$, $\varphi(z, x, u)$ denotes the solution of the integral equation
\begin{equation}
\label{eq: phi}
\varphi(z, x, u) = x + \int_{[0;u)} z (\varphi(z, x, v))\mu({\mathrm d}v),
\end{equation}
and $\mu(du)$ is a  probability measure defined on the Borel subsets of the interval $[0;1]$.

As it was shown in \cite{YN} there exists a unique solution of equation (\ref{eq:limit int}) if $f$ is Lipschitz continuous function.

\begin{defn}
 We say that the function $\sigma: [0;1] \to [0;1]$ belongs to class $G$ if there is a system of pairwise-disjoint intervals $(a_i;b_i] \subseteq [0;1]$, $i \in I$ such that
\begin{equation}
\label{eq: class G}
\sigma(u)= \left \{\begin{array}{l}
b_i, \ u \in (a_i;b_i], \\ u, \ u \notin \bigcup_{i \in I}
(a_i;b_i]. \\
\end{array} \right.
\end{equation}
\end{defn}

The following theorems describe the limits of the sequence $X_n$.
\begin{thm}[\cite{YB}]\label{theorem:suficient}
 Let $f$ be Lipschitz function satisfying (\ref{eq: fg}), $L$ is a
right-continuous function of a finite variation. Suppose $\int_{t \in \mathbf{T}} |x_n^0(\tau_t)-x^0| {\mathrm d} t \to 0$ and $F_n(F_n^{-1}(u)-\delta h_n ) \to
\sigma (u)$ as $n \to \infty$ and $h_n \to 0$ for all $\delta \in (0;1)$ and for any continuity point $u \in [0;1]$ of $\sigma$. Then $\sigma$
belongs to class $G$ and $$\int_{\mathbf{T}} |x_n(t)-x(t)| {\mathrm d} t \to 0$$ as $n \to \infty$ and $h_n \to 0$, where $x_n(t)$ is a solution of
equation (\ref{eq:representetieves system}) and $x(t)$ is a solution of the equation (\ref{eq:limit int}) with the measure $\mu$ generated by function $\sigma$.
\end{thm}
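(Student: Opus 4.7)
The plan is to split the argument into three stages: (i) verify that the limit $\sigma$ is necessarily of class $G$; (ii) obtain uniform bounds on $x_n$ in order to reduce to a compactness argument; (iii) identify the $L^1$-limit by a jump-by-jump comparison with equation (\ref{eq:limit int}).

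For (i), note that $F_n$ is non-increasing and $F_n(F_n^{-1}(u)) \ge u$, so the map $u \mapsto F_n(F_n^{-1}(u) - \delta h_n)$ is non-decreasing and takes values in $[u,1]$. Hence any pointwise limit $\sigma$ is non-decreasing with $\sigma(u) \ge u$. If $\sigma(u_0) = v_0 > u_0$, a short computation exploiting the freedom in the shift parameter $\delta \in (0,1)$ shows that the plateau value $v_0$ must be attained on an entire left-open, right-closed interval ending at $v_0$; taking the union of maximal such intervals produces the family $(a_i, b_i]$ witnessing $\sigma \in G$. For (ii), iterate the recurrence (\ref{eq:representetieves explicit}) and use the linear growth bound (\ref{eq: fg}) together with the fact that the total variation of $L_n$ is dominated by $V_a^b L$ uniformly in $n$; a discrete Gronwall estimate then yields $\sup_n \sup_{t \in \mathbf{T}} |x_n(t)| < \infty$, and hence equi-integrability of the integrand $f_n(\cdot, x_n(\cdot))$.

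For (iii), decompose $L = L^c + L^d$ and fix $\varepsilon>0$. Cover the (at most countable) jump set of $L$ by finitely many disjoint intervals $I_s = (s-\varepsilon, s+\varepsilon)$ carrying almost all of the jump variation, plus a residual piece of arbitrarily small total variation. On $\mathbf{T} \setminus \bigcup_s I_s$ the partial sums in (\ref{eq:representetieves explicit}) are an Euler-type discretization of $\int f(u,x(u))\,{\mathrm d}L^c(u)$ with $L_n$ uniformly approximating $L^c$ in supremum norm; the usual discrete Gronwall argument using the Lipschitz property of $f$ shows convergence in $L^1$ to the continuous integral. Inside each $I_s$ the change of variable $u = F_n(t_k - s)$ reparameterizes the grid points by the interval $[0,1]$, and the hypothesis $F_n(F_n^{-1}(u) - \delta h_n) \to \sigma(u)$ converts the discrete recursion across the smeared jump into a discretization of the $\varphi$-equation (\ref{eq: phi}) with increment $\Delta L(s) f$ and driving measure $\mu$ generated by $\sigma$. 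Passing to the limit inside each $I_s$ then produces exactly the jump contribution $\varphi(\Delta L(s) f, x(s-), 1) - x(s-)$ appearing in (\ref{eq:limit int}); summing over jumps, combining with the continuous part, letting $\varepsilon \to 0$, and applying a final Lipschitz–Gronwall estimate in $L^1$ closes the convergence.

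The main obstacle is the jump neighborhood analysis in step (iii): one must control three scales at once, namely the regularization width $1/n$, the grid step $h_n$, and the jump sizes $\Delta L(s)$, and show that the discrete empirical distribution of grid points falling inside a support of width $1/n$ near a jump converges to the measure $\mu$ governed by $\sigma$. Simultaneously, the discrete $\varphi$-type recursion built from these many grid points must be shown to converge to its continuous counterpart (\ref{eq: phi}) despite the Lipschitz constant of $f$ being applied over a number of steps that may tend to infinity; this will require a careful comparison of the discrete and continuous $\varphi$-equations combined with the equi-continuity supplied by step (ii).
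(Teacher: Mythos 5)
Your outline follows essentially the same route as the proof this paper relies on: the theorem is imported from \cite{YB}, and the machinery restated in Section~4 --- Lemma~\ref{conditionLemma} converting the hypothesis on $F_n(F_n^{-1}(u)-\delta h_n)$ into $L^1$-convergence of the grid distributions $\sigma^n(u,t)$ built from $\xi^n_k(t)=F_n(\zeta-t_{j+k}(t))$, and Lemma~\ref{mainLemma} yielding both $\sigma\in G$ and the convergence of the discrete $\varphi$-recursion (\ref{eq: varphi}) to the solution of (\ref{eq: phi}) --- is exactly your decomposition into the class-$G$ identification, a Gronwall/Euler argument for the continuous part (as in Proposition~\ref{statement:L^c}), and the $F_n$-reparameterization of the grid near each jump (as in Proposition~\ref{prop:phi_n->x+}). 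The only point worth stressing is that the convergence of $\sigma^n$ and $\phi^n$ holds after integrating over the phase $\tau_t\in[a;a+h_n)$, i.e.\ in $L^1(\mathbf{T})$ rather than pointwise, which is why the conclusion is $L^1$-convergence of $x_n$; your sketch is consistent with this but does not make the averaging explicit.
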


\begin{thm}\label{theorem:necessary}
Let $L$ be right-continuous function of a finite variation. Suppose for each Lipschitz function $f$ satisfying (\ref{eq: fg}),
solution of equation (\ref{eq:representetieves system}) $X_n$ converges in $\mathbf{L_1}(\mathbf{T})$ as $n\rightarrow \infty$, $h_{n}\rightarrow 0$.

If function $L$ continuous, then the limit of $x_n$ is a solution of equation (\ref{eq:limit int}). If $L$ is discontinuous, then there exists function
$\sigma \in G$ such that $F_{n}(F^{-1}_{n}(u)-\delta h_{n})\rightarrow \sigma(u)$ as $n\rightarrow \infty$, $h_{n}\rightarrow 0$, for all
$\delta\in (0,1)$ and $u \in [0,1]$ - continuity points of $\sigma$, and the limit of $x_n$ is a solution of equation (\ref{eq:limit int}) with measure
$\mu$ generated by function $\sigma$.
\end{thm}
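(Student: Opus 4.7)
I would split on whether $L$ is continuous and then use a Helly-compactness plus uniqueness argument.

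When $L$ is continuous, the convolutions $L_n=L*\rho_n$ converge to $L$ uniformly on $\mathbf{T}$ (continuity plus finite variation on a compact interval), and $f_n\to f$ locally uniformly. A standard telescoping estimate on the recurrence (\ref{eq:representetieves system}), together with the hypothesis that $x_n$ converges in $\mathbf{L_1}$, then shows that the limit $x$ satisfies the Volterra equation $x(t)=x^0+\int_a^t f(s,x(s))\,\mathrm{d}L(s)$. Since $\Delta L\equiv 0$, this coincides with (\ref{eq:limit int}) and no auxiliary measure $\mu$ plays any role.

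The interesting case is when $L$ has jumps. First observe that $u\mapsto F_n(F_n^{-1}(u)-\delta h_n)$ is non-decreasing from $[0,1]$ to $[0,1]$ (composition of two non-increasing maps, bounded below by $u$). For each $\delta$ in a countable dense subset of $(0,1)$, Helly's selection theorem yields a subsequence along which these functions converge at every continuity point of a monotone limit $\sigma_\delta$; a diagonal extraction produces a single subsequence $n_k$ working for all such $\delta$ simultaneously. On this subsequence the assumptions of Theorem~\ref{theorem:suficient} are satisfied, so $\sigma_\delta\in G$ and $x_{n_k}\to x^{(\delta)}$ in $\mathbf{L_1}$, where $x^{(\delta)}$ solves (\ref{eq:limit int}) with measure $\mu_{\sigma_\delta}$.

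The crux is then to show that $\mu_{\sigma_\delta}$ is independent of $\delta$ and of the extracting subsequence. By hypothesis the whole sequence $x_n$ converges in $\mathbf{L_1}$ for \emph{every} Lipschitz $f$ satisfying (\ref{eq: fg}), so $x^{(\delta)}$ must coincide with the common limit $x$ for every $\delta$ and every $f$. Fix a jump time $s_0$ of $L$ with $\Delta L(s_0)\neq 0$ and take $f(t,x)=g(x)$ for arbitrary Lipschitz $g$; then the jump contribution $\varphi(\Delta L(s_0)g,x(s_0-),1)-x(s_0-)$ in (\ref{eq:limit int}), defined through (\ref{eq: phi}), must be the same when computed against $\mu_{\sigma_\delta}$ and against $\mu_{\sigma_{\delta'}}$. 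A Gronwall-type linearization argument then lets one read off $\int g\,\mathrm{d}\mu$ from this common trajectory as $g$ varies over Lipschitz functions, pinning $\mu$ down uniquely. Injectivity of the correspondence $\sigma\mapsto\mu_\sigma$ on class $G$ (which follows by recovering the intervals $(a_i,b_i]$ from the atoms of $\mu_\sigma$ and its Lebesgue part) forces $\sigma_\delta$ to equal a single $\sigma\in G$ for every $\delta$. A standard subsequence-of-subsequence argument then upgrades this to convergence of the full sequence $F_n(F_n^{-1}(u)-\delta h_n)\to\sigma(u)$ at continuity points of $\sigma$, after which Theorem~\ref{theorem:suficient} identifies the limit of $x_n$ as the solution of (\ref{eq:limit int}) with $\mu=\mu_\sigma$.

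The main obstacle is the injectivity step: proving that the nonlinear jump map $g\mapsto\varphi(\Delta L(s_0)g,x(s_0-),1)$ separates probability measures on $[0,1]$ as $g$ ranges over Lipschitz functions, and that distinct elements of $G$ produce genuinely distinct measures $\mu_\sigma$. Once that is in hand, everything else reduces to compactness and a clean invocation of Theorem~\ref{theorem:suficient}.
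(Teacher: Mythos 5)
Your treatment of the continuous case matches the paper's (Proposition \ref{statement:L^c}): a Gronwall comparison of $x_n$ with the solution of the Volterra equation. For the discontinuous case your skeleton (compactness plus uniqueness forced by quantifying over all $f$) is the right one, but there are two genuine gaps. First, you apply Theorem \ref{theorem:suficient} to subsequential Helly limits $\sigma_\delta$ that a priori depend on $\delta$; that theorem's hypothesis requires $F_n(F_n^{-1}(u)-\delta h_n)$ to converge to one and the same $\sigma$ for \emph{all} $\delta\in(0,1)$, so you cannot conclude $\sigma_\delta\in G$, nor that $x_{n_k}$ converges to a solution built from $\mu_{\sigma_\delta}$, before the $\delta$-independence is established --- which is part of what is to be proved. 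The paper circumvents this by never running Helly on $F_n(F_n^{-1}(u)-\delta h_n)$ directly: it works at a fixed jump $\zeta$ with the partition $\xi^n_k(t)=F_n(\zeta-t_{j+k})$ and the induced step functions $\sigma^n(u,t)$, proves weak convergence of these (Lemma \ref{lemma: obratnaya}), and only then transfers back to $F_n(F_n^{-1}(u)-\delta h_n)$ for all $\delta$ via the converse half of Lemma \ref{conditionLemma}, a nontrivial imported result for which your argument has no substitute.

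Second, and more seriously, your ``Gronwall-type linearization to read off $\int g\,\mathrm{d}\mu$'' would fail: the map $g\mapsto\varphi(\Delta L(s_0)g,x,1)$ is genuinely nonlinear in $g$, and its linearization only sees the total mass $\mu([0,1))$, not the measure itself. The paper's Lemma \ref{lemma: obratnaya} performs the separation with a specific countable family of ramp functions $z_{q,\varepsilon}$ and an explicit product formula for the corresponding $\varphi$, showing that two distinct monotone subsequential limits $\sigma^*\neq\sigma^{**}$ would force $\phi^*_u(1)=x+u$ while $\phi^{**}_u(1)=x+d$ with $u\neq d$, contradicting convergence; this is the technical heart of the proof and is not a routine linearization. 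In addition, before any such argument can be invoked you must connect the global hypothesis (convergence of $x_n$ in $\mathbf{L_1(T)}$ for every $f$) to the local evolution across a single jump; the paper needs Propositions \ref{prop:x^+}--\ref{prop:phi_x0} (cutoffs $f^\delta$ of $f$ around $\zeta$, one-sided limits $x^\pm(\zeta)$, and the identification of the increment of $x_n$ across $\zeta$ with the recursion $\varphi^n_k$) to achieve this, and your proposal has no counterpart to that localization step.
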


\section{Auxiliary statements}

Suppose that for any $t \in \mathbf{T}$ and $n \in \mathbb{N}$ partition of interval $[0;1]$ is given: $0=\xi^n_0(t) \leq \xi^n_1(t)  \leq \ldots \leq \xi^n_{p+2}(t)=1$, where $p$ depends on $n$. Consider the recurrent sequence $\varphi^n_{k}(t) = \varphi^n_k(z,x,t)$ for $t \in \mathbf{T}, k=0,1,\ldots,p+2, n \in \mathbb{N}, x \in \mathbb{R}$, $z$ - Lipschitz function.
\begin{equation}\label{eq: varphi}
\left\{
\begin{array}{ll}
    \varphi^n_{k+1}(t)=\varphi^n_k(t)+z(\varphi^n_k(t))(\xi^n_{k+1}(t) - \xi^n_{k}(t))\\
    \varphi^n_0(t)=x.\\
\end{array}
\right.
\end{equation}
For each $u \in [0;1]$, $t \in \mathbf{T}$, $n=1,2,\ldots$ define $\sigma^n(u,t)$ and $\phi^n(u,t)$ as follows
\begin{align}\label{eq: sigma^n}
\sigma^n(u,t)=\left\{
\begin{array}{ll}
    \xi^n_{k}(t), \xi^n_{k-1}(t)<u \leq \xi^n_{k}(t)\\
    0, u=0\\
\end{array}
\right.
\end{align}
\begin{align}\label{eq: phi^n}
\phi^n(u,t)=\left\{
\begin{array}{ll}
    \varphi^n_k(t), \xi^n_{k-1}(t)<u \leq \xi^n_{k}(t)\\
    x, u=0.\\
\end{array}
\right.
\end{align}
It is easy to see that $\phi_n$ is a solution of the following integral equation.
$$
\phi^n(u,t)=x+\int_{[0;u)}z(\phi^n(s,t))\sigma^n({\mathrm d}s,t).
$$

\begin{lem}[{\cite{YN}}]\label{mainLemma}%[TÁ. 182 - 188]
Suppose that there is a nonincreasing left-continuous function $\sigma(u), u \in [0;1]$ such that
$$
\int_{\mathbf{T}}\left | \sigma^n(u,t)-\sigma(u) \right | {\mathrm d}t \to 0
$$
as $n \to \infty$ for any continuity point $u$ of $\sigma$.

Then $\sigma$ belongs to class $G$ and
$$
\int_{\mathbf{T}}\left | \phi^n(u,t)-\phi(u) \right | {\mathrm d}t \to 0
$$
as $n \to \infty$ for any continuity point $u$ of $\phi$, where $\phi(u)$ is a solution of the equation
\begin{equation}\nonumber
\phi(u)=x+\int_{[0;u)}z(\phi(s)){\mathrm d} \sigma(s).
\end{equation}
\end{lem}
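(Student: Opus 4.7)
The plan is to verify that $\sigma$ belongs to class $G$ by passing properties of $\sigma^n$ to the limit, and then to deduce convergence of $\phi^n$ to $\phi$ via a uniform bound, Helly's selection theorem, and identification of any subsequential limit with $\phi$ using uniqueness of the limit equation.

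For the first part, each $\sigma^n(\cdot, t)$ is nondecreasing with $\sigma^n(u, t) \geq u$, so these properties descend to $\sigma$ at continuity points and extend by left-continuity. What remains is the staircase structure (\ref{eq: class G}), which for a nondecreasing left-continuous $\sigma$ with $\sigma(u) \geq u$ is equivalent to the idempotency $\sigma(\sigma(u)) = \sigma(u)$. I would derive idempotency from the identity $\sigma^n(\sigma^n(u, t), t) = \sigma^n(u, t)$. Fix a continuity point $u$ of $\sigma$, extract a subsequence with $\sigma^n(u, t) \to v := \sigma(u)$ almost everywhere in $t$, and take continuity points $v - \epsilon < v + \epsilon$ of $\sigma$. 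For large $n$, monotonicity of $\sigma^n$ in its first argument gives $\sigma^n(v - \epsilon, t) \leq \sigma^n(\sigma^n(u, t), t) = \sigma^n(u, t) \leq \sigma^n(v + \epsilon, t)$; passing to the $n$-limit yields $\sigma(v - \epsilon) \leq v \leq \sigma(v + \epsilon)$, and $\epsilon \to 0$ along continuity points together with left-continuity of $\sigma$ gives $\sigma(v) = v$.

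For the second part, the linear growth of $z$ and a discrete Gronwall argument on (\ref{eq: varphi}) yield a uniform bound $|\phi^n(u, t)| \leq C$, and likewise $|\phi(u)| \leq C$ and $|z(\phi^n)|, |z(\phi)| \leq C'$. Fix a continuity point $u_0$ of $\phi$. Using the $L^1(\mathbf{T})$-convergence hypothesis on a countable dense set $D$ of continuity points of $\sigma$, pass to a subsequence along which $\sigma^n(u, t) \to \sigma(u)$ simultaneously for all $u \in D$ at almost every $t$. For each such $t$, the sequence $\phi^n(\cdot, t)$ is of uniformly bounded total variation (at most $C'$), so Helly's selection theorem extracts a further subsequence converging pointwise on a dense set to some $\phi^*(\cdot, t)$. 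I would then verify that $\phi^*$ satisfies $\phi^*(u) = x + \int_{[0, u)} z(\phi^*(s)) \mathrm{d}\sigma(s)$ by passing to the limit in the defining integral equation for $\phi^n$; uniqueness of the limit equation (from \cite{YN}) forces $\phi^* = \phi$, so a subsequence-of-subsequence argument gives $\phi^n(u_0, t) \to \phi(u_0)$ almost everywhere in $t$, and dominated convergence via the uniform bound yields $L^1(\mathbf{T})$-convergence.

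The main obstacle is the limit passage in the Stieltjes integral, because $z(\phi^*)$ has jumps precisely at the atoms $a_i$ of $\sigma$, violating the hypothesis of continuity at jump points needed for a direct Helly--Bray argument. The resolution uses the specific structure of $\sigma^n$: each atom of $\sigma^n$ at $\xi^n_k(t)$ has mass $\xi^n_{k+1}(t) - \xi^n_k(t)$ and pairs with the integrand value $\phi^n(\xi^n_k(t)) = \varphi^n_k(t)$ at the left endpoint of the ``big step'' accommodating the corresponding atom of $\sigma$ at $a_i$. Since $\xi^n_k(t) \to a_i$ from below and $\phi^n(\xi^n_k(t))$ tracks the left-continuous value $\phi(a_i)$, the atomic contribution converges to $z(\phi(a_i))(b_i - a_i)$, matching the atom of $\sigma$ at $a_i$ exactly.
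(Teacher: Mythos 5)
The paper does not actually prove Lemma~\ref{mainLemma}: it is imported verbatim from \cite{YN}, so there is no in-paper argument to measure you against. Taken on its own terms, your outline is sound and almost certainly reconstructs the spirit of the cited proof: the reduction of class-$G$ membership to idempotency (for a nondecreasing, left-continuous $\sigma$ with $\sigma(u)\ge u$, $\sigma(0)=0$, $\sigma(1)=1$, idempotency does force the level sets of non-fixed values to be intervals $(a_i,b_i]$ with $\sigma(a_i)=a_i$), the transfer of $\sigma^n(\sigma^n(u,t),t)=\sigma^n(u,t)$ to the limit via monotonicity and a.e.\ subsequences, and the Helly-plus-uniqueness identification of subsequential limits of $\phi^n(\cdot,t)$ are all correct, and you have put your finger on the genuinely delicate step, namely the passage to the limit in the Stieltjes integral at the common jump points of integrand and integrator.

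Two points in your final paragraph are stated more strongly than you can justify and would need repair in a full write-up. First, the existence of a single ``big step'' $(\xi^n_k(t),\xi^n_{k+1}(t)]$ carrying the mass $b_i-a_i$ is not automatic; it must be derived from the hypothesis, by noting that $\sigma^n(u,t)\to b_i$ for every continuity point $u\in(a_i,b_i)$ forces one step eventually to contain $(a_i+\epsilon,b_i-\epsilon)$, while $\sigma^n(u,t)\to\sigma(u)\le a_i$ for continuity points $u<a_i$ pins its left endpoint near $a_i$. Second, $\xi^n_k(t)\to a_i$ need not hold ``from below''; what saves the argument is that $\varphi^n_k(t)-\varphi^n_m(t)$ is controlled by $K_2(1+C)(\xi^n_k(t)-\xi^n_m(t))$, where $\xi^n_m(t)=\sigma^n(a_i-\epsilon,t)$, so that $\varphi^n_k(t)$ converges to the left limit $\phi^*(a_i-)$ regardless of the side from which $\xi^n_k(t)$ approaches $a_i$ (and at this stage the target value must be written as $z(\phi^*(a_i))(b_i-a_i)$, not $z(\phi(a_i))(b_i-a_i)$, since $\phi^*=\phi$ is only concluded afterwards from uniqueness). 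These are repairable imprecisions in a sketch rather than fatal gaps.
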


For any fixed point $t$, let us denote $\phi^n(u,t)$ and $\sigma^n(u,t)$ as $\phi^n(u)$ and $\sigma^n(u)$ respectively.
\begin{lem}
\label{lemma: obratnaya} Suppose that there exists $x \in \mathbb{R}$ such that for each Lipschitz function $z:\mathbb{R} \to \mathbb{R}$ of a bounded growth number sequence $\phi^n(1) = \varphi_{p_n}^n(z,x)$ converges as $n \to \infty$. Then sequence of measures generated by functions $\sigma^n$ converges weakly.
\end{lem}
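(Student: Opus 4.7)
\medskip

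\noindent\textbf{Proof plan.} The plan is to combine Helly compactness with an identification of a class-$G$ measure from the functional $z\mapsto\phi^\nu_z(1)$ via Taylor expansion in $z$.

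Since each $\nu^n$ is a probability measure on the compact interval $[0,1]$, by Helly's theorem every subsequence has a further weakly convergent subsequence, and each weak limit is of class-$G$ form. Indeed, the CDF $F^n(s)=\xi^n_{k+1}$ on $[\xi^n_k,\xi^n_{k+1})$ satisfies $F^n(s)\geq s$, equals the identity wherever the gaps $\Delta^n_k:=\xi^n_{k+1}-\xi^n_k$ shrink to $0$, and wherever a gap persists with $\xi^n_k\to a$, $\Delta^n_k\to m>0$ the limit $F$ jumps at $a$ from $a$ to $b:=a+m$ and is constant on $[a,b]$.

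Assume for contradiction that two subsequences $\nu^{n_k}$, $\nu^{n_k'}$ converge weakly to distinct class-$G$ measures $\nu'$, $\nu''$. For any Lipschitz $z$ of bounded growth, the single-$t$ analogue of Lemma~\ref{mainLemma} (standard Helly-type stability of the integral equation $\phi(u)=x+\int_{[0,u)}z(\phi)\,\mathrm d\mu$ under weak convergence of $\mu$) yields $\phi^{n_k}(1)\to\phi^{\nu'}(1)$ and $\phi^{n_k'}(1)\to\phi^{\nu''}(1)$. The hypothesis forces $\phi^{\nu'}(1)=\phi^{\nu''}(1)$ for every Lipschitz $z$ of bounded growth.

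To deduce $\nu'=\nu''$, fix a Lipschitz $g$ and set $z_\varepsilon=\varepsilon g$. A Gronwall estimate using only that $\nu$ has unit mass makes $\varepsilon\mapsto\phi^\nu_{\varepsilon g}(1)$ analytic on a neighbourhood of $0$ whose radius depends only on the Lipschitz constant of $g$. Picard iteration yields
\[
\phi^\nu_{\varepsilon g}(1)=x+\varepsilon g(x)+\varepsilon^2 g(x)g'(x)M_1(\nu)+\varepsilon^3\Bigl[g(x)g'(x)^2 N_2(\nu)+\tfrac12 g(x)^2 g''(x)M_2(\nu)\Bigr]+O(\varepsilon^4),
\]
where $M_k(\nu):=\int_{[0,1)}F(s-)^k\,\mathrm d\nu(s)$. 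For class-$G$ $\nu$, $F(s-)=s$ on $\mathrm{supp}(\nu)$ (at each atom $a_i$, $F(a_i-)=a_i$; on the continuous part $F$ is the identity), so $M_k(\nu)=\int s^k\,\mathrm d\nu(s)$ is the ordinary $k$-th moment. By taking $g(y)=1+\chi(y-x)(y-x)^k$ with $\chi$ a smooth cutoff making $g$ globally Lipschitz on a compact set containing the range of $\phi^\nu_{\varepsilon g}$, every term of the expansion at orders below $\varepsilon^{k+1}$ vanishes, and at order $\varepsilon^{k+1}$ only a nonzero multiple of $M_k(\nu)$ survives. Hence all moments $M_k(\nu)$, $k\geq1$, are determined by $\varepsilon\mapsto\phi^\nu_{\varepsilon g}(1)$. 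By the Hausdorff moment problem on $[0,1]$, this identifies $\nu$, whence $\nu'=\nu''$, a contradiction.

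The main difficulty is this identification step: verifying the Taylor expansion of the Picard iterates, checking that the family of test functions $g(y)=1+\chi(y-x)(y-x)^k$ really isolates each moment, and justifying the interchange of limits $n\to\infty$ and $\varepsilon\to0$ via the uniform-in-$n$ radius of analyticity. The decisive simplification, reducing the iterated integrals $M_k$ to ordinary moments, is the class-$G$ identity $F(s-)=s$ on $\mathrm{supp}(\nu)$.
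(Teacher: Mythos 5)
Your proof follows the paper's skeleton (Helly compactness, passage to two subsequential weak limits, the stability statement of Lemma~\ref{mainLemma} to get class-$G$ limits and convergence of $\phi^{n}(1)$, then a contradiction by exhibiting a test function $z$ separating the two limits), but the separation step is genuinely different. The paper uses the explicit "soft indicator" functions $z_{q,\varepsilon}$ ($=1$ below the level $x+q$, ramping to $0$ over a window of width $\varepsilon$), for which the limit integral equation can be solved in closed form; letting $\varepsilon\to0$ and $q\to u$ then reads off the value $\sigma^{*}(u)$ directly from $\phi^{*}(1)$, and a preliminary analysis of the sets $B$ and $C$ pins down that at a point of disagreement one limit must equal $u$ and the other $d>u$. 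You instead rescale, $z=\varepsilon g$, Taylor-expand the Picard iterates, and observe that with $g(y)=1+\chi(y-x)(y-x)^{k}$ the coefficient of $\varepsilon^{k+1}$ isolates $\int F(s)^{k}\,{\mathrm d}\nu(s)$, which for class-$G$ measures equals the ordinary $k$-th moment because $F(s)=s$ on $\mathrm{supp}(\nu)$ (a correct and pleasant observation); the Hausdorff moment problem then forces $\nu'=\nu''$. Your route avoids the paper's $B/C$ case analysis and the explicit product formula, at the price of justifying the order-$(k{+}1)$ expansion of the solution map (routine, since $|\phi-x|=O(\varepsilon)$ keeps $\phi$ in the region where $g$ is polynomial, and it is needed only for the two fixed limit measures, so no uniformity in $n$ is required); it also still yields the countable-test-family refinement of Remark~\ref{remark: lemma obratnaya}, since only $k\in\mathbb{N}$ and rational $\varepsilon$ are needed.

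One point needs correcting. You justify $\phi^{n_k}(1)\to\phi^{\nu'}(1)$ as "standard Helly-type stability of the integral equation under weak convergence of $\mu$". That general statement is false: e.g.\ $\mu_n=\tfrac12\delta_{1/2-1/n}+\tfrac12\delta_{1/2+1/n}\to\delta_{1/2}$ weakly, yet $\phi^{\mu_n}(1)=x+\tfrac12 z(x)+\tfrac12 z\bigl(x+\tfrac12 z(x)\bigr)\not\to x+z(x)=\phi^{\delta_{1/2}}(1)$ in general. What saves the argument here is the special block structure of the $\sigma^{n}$ from (\ref{eq: sigma^n}) --- each atom sits at $\xi^{n}_{k-1}$ and carries exactly the mass $\xi^{n}_{k}-\xi^{n}_{k-1}$, so mass coalescing into an atom of the limit does so from a single block --- and this is precisely the content of Lemma~\ref{mainLemma}, which you should cite as the actual justification rather than generic weak-convergence stability. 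The same remark applies to your second paragraph: that every subsequential weak limit is of class-$G$ form is not a consequence of Helly alone but of Lemma~\ref{mainLemma} applied to the convergent subsequence. With those attributions fixed, the argument goes through.
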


\begin{proof}
By definition of $\sigma^n(u)$ it is evident that $\sigma^n(u) \geq u$.
It means that we can represent interval $[0;1]$ as disjoint union of sets
$A$, $B$ and $C$, where
$$A=\{u:\lim_{n \to \infty}\sigma^n(u)=u\},$$
$$B=\{u:\overline{\lim}_{n \to \infty}\sigma^n(u) \geq \underline{\lim}_{n \to \infty}\sigma^n(u)>u\},$$
$$C=\{u:\overline{\lim}_{n \to \infty}\sigma^n(u) > \underline{\lim}_{n \to \infty}\sigma^n(u)=u\}.$$
Let us consider Lipschitz functions $z_{q,\varepsilon}$ for $q,\varepsilon \in \mathbb{Q}$, $\varepsilon >0$
defined as follows:
\begin{equation}\nonumber
z_{q,\varepsilon}(p)=\left\{
\begin{array}{ll}
    1,& p \leq x + q\\
    \frac{1}{\varepsilon}(x + q +\varepsilon -p),& x + q <p\leq x + q + \varepsilon\\
    0,& p > x + q + \varepsilon.\\
\end{array}
\right.
\end{equation}

Consider an arbitrary point $u \in B$. By definition of set $B$ we have that $\underline{\lim}_{n \to
\infty}\sigma^n(u)=b>u$. For each $n$ there exists such a number $m$ that
$\xi^n_{m-1}<u \leq \xi^n_{m}$. Then for large enough $n$ we have $\sigma^n(u)=\xi^n_{m} \geq
(b+u)/2$. In this case for each function $z_{q,\varepsilon}$
 such that $u \leq q < q + \varepsilon \leq (b+u)/2$,
$\phi^n(1) = x + \xi^n_{m}.$ Since $\phi^n(1)$ converges then $\sigma^n(u)=\xi^n_{m}$ converges also. At the same time convergence of $\sigma^n(u)$ leads to the following equality:
\begin{equation}\label{caseB}
\overline{\lim}_{n \to \infty}\sigma^n(u) = \underline{\lim}_{n
\to \infty}\sigma^n(u)=b>u.
\end{equation}

Consider an arbitrary point $u \in C$. By the definition of the set $C$ we have $u=\underline{\lim}_{n \to
\infty}\sigma^n(u) < \overline{\lim}_{n \to \infty}\sigma^n(u)
=d$. As $\overline{\lim}_{n \to \infty}\sigma^n(u) =d$, then
there is a subsequence $\{n_k\}$, such that $\lim_{n_k \to
\infty}\sigma^{n_k}(u)=d$. Suppose that there exists subsequence $\{n_i\}$, such that $\lim_{n_i \to
\infty}\sigma^{n_i}(u)=s$, where $u<s<d$. But if we take subsequence $\{n_k\}\cup\{n_i\}$ then the same arguments as in (\ref{caseB}) imply that $\lim_{n_i \to \infty}\sigma^{n_i}(u)=b$, what leads to the contradiction with the definition of $n_i$. Thus we can represent sequence $\{n\}$ as disjoint union $\{n'\}\sqcup\{n''\}$, where $\lim_{n'' \to
\infty}\sigma^{n''}(u)=d$ and $\lim_{n' \to
\infty}\sigma^{n'}(u)=u$.

According to Helly's selection principle $\sigma^n$ is a weakly-compact sequence, it means that from each subsequence of $\sigma^n$ one can select subsequence that converges weakly. Let us show that the whole sequence $\sigma^n$ also converges weakly.

Suppose there exist two subsequences that converge to different functions: $\sigma^{n*} \to \sigma^*$, $\sigma^{n**} \to \sigma^{**}$ and
 $\sigma^{*} \neq \sigma^{**}$. Using lemma \ref{mainLemma}
we obtain that $\sigma^{*} $ and $ \sigma^{**}$ belong to class $G$,
what implies that $u=1$ is a continuity point for both $\sigma^{*} $ and $ \sigma^{**}$. Moreover, $\phi^{n*} (u) \to
\phi^* (u)$ and $\phi^{n**} (u) \to \phi^{**} (u)$ for each continuity point of $\sigma^{*} $ and $ \sigma^{**}$
respectively. Additionally $\phi^{*} $ and $ \phi^{**}$ satisfy the integral equations
\begin{align}
\label{eq: phi^*}
\phi^* (u)=x+\int_{[0;u)}z(\phi^*(s)){\mathrm d} \sigma^*(s),& \quad
\phi^{**} (u)=x+\int_{[0;u)}z(\phi^{**}(s)){\mathrm d} \sigma^{**}(s).
\end{align}
According to lemma's conditions, $\phi^{n}(1)$ converges, therefore $
\phi^{*}(1) = \phi^{**}(1)$.

Let us fix an arbitrary continuity point $u \in [0;1]$ of both
$\sigma^*$ and $\sigma^{**}$ such that $\sigma^*(u) \neq
\sigma^{**}(u)$, suppose that $\sigma^*(u) <
\sigma^{**}(u)$. Since $\sigma^*(u)$ and $\sigma^{**}(u)$
are two different limits of subsequences of $\sigma^n(u)$ then $u$ belongs to set $C$. As it was shown above in this case, sequence $\{n\}$ is a disjoint union $\{n'\}\sqcup\{n''\}$, where $\lim_{n'' \to
\infty}\sigma^{n''}(u)=d$ and $\lim_{n' \to
\infty}\sigma^{n'}(u)=u$. Consequently $\sigma^*(u)=u$ and $\sigma^{**}(u)=d>u$.

 Now consider equation (\ref{eq: phi^*}) with function $z=z_{q,
\varepsilon}$, where $q, \varepsilon \in \mathbb{Q}$ and $u\leq q < q + \varepsilon < d$. Denote their solutions as  $\phi_{q, \varepsilon}^*$ and $\phi_{q,\varepsilon}^{**}$
respectively. Taking $w_{q, \varepsilon}^* = \min\{1, \inf \{t \ge q: \Delta
\sigma^* (t) \ge \varepsilon\}\}$ one can write $\phi_{q, \varepsilon}^*$ in explicit way
$$
\phi^*_{q, \varepsilon} (t) = \left\lbrace
\begin{array}{lcl}
x+\sigma^* (t), \mbox{ if } t \le q ,\\
x + q + \varepsilon - \varepsilon \
{\mathrm e}^{-(\sigma^*(t)-q)/\varepsilon}\prod\limits_{q \le s < t}
(1-\Delta \sigma^* (s)/\varepsilon) {\mathrm e}^{\Delta \sigma^*
(s)/\varepsilon},\\
\mbox{\qquad\qquad\qquad\qquad\qquad\qquad\qquad\qquad\qquad\qquad\quad if }  q < t \le w_{q, \varepsilon}^*,\\
x + q + \varepsilon - \varepsilon \
{\mathrm e}^{-(\sigma^*(w_{q, \varepsilon}^*)-q)/\varepsilon} {\mathrm e}^{-\Delta
\sigma^* (w_{q, \varepsilon}^*)/\varepsilon} \times\\
\qquad\qquad\qquad\quad\ \times\prod\limits_{q \le s \le
w_{q, \varepsilon}^*} (1-\Delta \sigma^* (s)/\varepsilon) {\mathrm e}^{\Delta
\sigma^* (s)/\varepsilon}, \mbox{ if } t > w_{q, \varepsilon}^*.
\end{array}
\right.$$

Letting $\varepsilon \to 0$ in the last formula we obtain $w_{q, \varepsilon}^* \to
w_q^*= \min\{1,\inf \{t \ge q:
\Delta \sigma^* (t) > 0\}\}$ and $w_{q, \varepsilon}^* \ge w_q^*$. Moreover
$$
\phi_{q}^*(1)=\lim_{\varepsilon \to 0}\phi_{q,\varepsilon}^* (1) = \left\{
\begin{array}{lc}
x +\sigma^* (q),\mbox{ if } q < w_q^*\\
x +\sigma^* (w_q^*) + \Delta \sigma^*(w_q^*), \mbox{ if } q = w_q^*.
\end{array}
\right.
$$
Similarly one can obtain
$$
\phi_{u}^*(1)=\lim_{q \to u}\phi_{q}^* (1) = \left\{
\begin{array}{lc}
x +\sigma^* (u),\mbox{ if } u < w_u^*\\
x +\sigma^* (w_u^*) + \Delta \sigma^*(w_u^*), \mbox{ if } u = w_u^*.
\end{array}
\right.
$$
Since $u$ is a continuity point of $\sigma^*$, then $\Delta \sigma^*(w_u^*)=0$ as $u = w_u^*$ and $\phi_{u}^* (1) = x + \sigma^* (u)=x+u$.

Let us consider $\phi_{q, \varepsilon}^{**}$. Taking into account that $\varepsilon \le d-u$, we obtain that $\phi_{q,\varepsilon}^{**}(1) = x + \sigma^{**} (u)=x+d$. Letting $\varepsilon \to 0$, and then $q \to u$ we obtain that $\phi_{u}^{**}(1)=x+d$.

Thus it is shown that $\phi_{u}^* (1) =x+u$ and $\phi_{u}^{**}(1)=x+d$, where $u \neq d$. According to lemma's conditions $\phi_{q,\varepsilon}^{*} (1) = \phi_{q,\varepsilon}^{**} (1)$, therefore $\phi_u^{*} (1) = \phi_u^{**} (1)$, what leads to the contradiction. Hence $\sigma^{n}(u) \to \sigma(u)$ for all continuity points $u \in [0;1]$ of $\sigma$.
\end{proof}
\begin{rem}\label{remark: lemma obratnaya}
Above lemma holds under weaker conditions. Sequence of measures generated by functions $\sigma^n$ converges weakly if number sequence $\phi^n(1) = \varphi_{p_n}^n(z,x)$ converges for countable set of functions $z_{q, \varepsilon}$ of a given type, where $q, \varepsilon \in \mathbb{Q}$.
\end{rem}

Let $j$ be a number such that $t_{j} \le \zeta-1/n < t_{j+1}$.
Set $\xi_k^{n}(t) = F_n (\zeta - t_{j+k})$, $t \in \mathbf{T}$, $n \in \mathbb{N}$ and $k = 0,1,\dots, p+2$, where $p = [1/(n h_n)]$ .
Notice that $\xi_k^{n}(t)$ depends on $t \in \mathbf{T}$, since $t_{j + k}$ depends on $t$.
Denote by $\varphi_k^{n} (z, x, t)$ the sequence which is defined  by the formula (\ref{eq: phi^n}) and
let $\sigma^n_i (u,t)$ be the sequence of functions which is given by the formula (\ref{eq: sigma^n}).

The following lemma from \cite{Y1}  will give the necessary and sufficient conditions for $\sigma^n_i (u,t)$ to satisfy the statement of lemma \ref{mainLemma} and lemma \ref{lemma: obratnaya}.

\begin{lem}[\cite{Y1}]
\label{conditionLemma}
Suppose that $F_n(F_n^{-1}(u)-\delta h_n) \to \sigma(u)$ as $n \to \infty$ and $h_n \to 0$ for all $\delta \in (0;1)$ and all continuity points $u \in [0;1]$ of $\sigma$. Then
$$ \int_{\mathbf{T}} |\sigma^n(u,t)-\sigma(u)|{\mathrm d}t \to 0$$
as $n \to \infty$ and $h_n \to 0$ for all continuity points $u \in [0;1]$ of $\sigma$.

Conversely, if there exists such a function $\sigma(u,t)$, $u \in [0;1]$, $t \in \mathbf{T}$ that $\sigma(u,t) \in \mathbf{L_1 (\mathbf{T})} $ for any $u \in [0;1]$, $\sigma(u,t)$ is left continuous and  nondecreasing on $u$ for any $t \in \mathbf{T}$, and for each continuous function $z: [0;1] \to \mathbb{R}$ we have that
$$\int_{\mathbf{T}} \left| \int_0^1 z(u) \sigma^n ({\mathrm d} u, t) - \int_0^1 z(u) \sigma ({\mathrm d} u, t) \right|{\mathrm d} t \to 0,$$
Then $\sigma ( u, t)$ does not depend on $t$, i.e. $\sigma ( u, t) =\sigma (u)$ and $F_n(F_n^{-1}(u)-\delta h_n) \to \sigma(u)$ for all continuity points $u \in [0;1]$ of $\sigma$ and for any $\delta \in (0;1)$.
\end{lem}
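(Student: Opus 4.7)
The plan rests on the structural identity
\[
\sigma^n(u,t)=F_n\bigl(F_n^{-1}(u)-\delta(u,t,n)\,h_n\bigr),\quad \delta(u,t,n)\in[0,1).
\]
To derive it, pick the index $k$ with $\xi^n_{k-1}(t)<u\le\xi^n_k(t)$; since $F_n$ is non-increasing, $F_n^{-1}(u)\in[\zeta-t_{j+k},\,\zeta-t_{j+k-1})$, so setting $\delta h_n:=F_n^{-1}(u)-(\zeta-t_{j+k})$ gives the identity and $\delta\in[0,1)$. Using $t_{j+k}=\tau_t+(j+k)h_n$, one checks $\delta=\mathrm{frac}\bigl((F_n^{-1}(u)+\tau_t-\zeta)/h_n\bigr)$. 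In particular, for fixed $u$ and $n$, the map $t\mapsto\delta(u,t,n)$ is a sawtooth sweeping $[0,1)$ exactly once on each block $[a+mh_n,a+(m+1)h_n)$.

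\emph{Direct implication.} Fix a continuity point $u$ of $\sigma$ and $\varepsilon\in(0,1/2)$. Since $\delta\mapsto F_n(F_n^{-1}(u)-\delta h_n)$ is non-decreasing, sandwiching between its values at $\delta=\varepsilon$ and $\delta=1-\varepsilon$ and invoking the hypothesis give uniform convergence $\sigma^n(u,t)\to\sigma(u)$ on the set $\{t:\delta(u,t,n)\in[\varepsilon,1-\varepsilon]\}$. By the sawtooth picture the complement has Lebesgue measure at most $2\varepsilon(b-a)+O(h_n)$, where the trivial bound $|\sigma^n-\sigma|\le 1$ is used. Letting first $n\to\infty$ and then $\varepsilon\to 0$ yields $\int_{\mathbf{T}}|\sigma^n(u,t)-\sigma(u)|\,{\mathrm d}t\to 0$.

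\emph{Converse.} First I upgrade the hypothesized convergence against continuous test functions to the pointwise $L^1(\mathbf{T})$-convergence $\sigma^n(u,\cdot)\to\sigma(u,\cdot)$ at each continuity point $u$ of $\sigma(\cdot,t)$ for a.e.~$t$, by approximating $\chi_{[0,u]}$ by continuous functions and using the uniform bound $|\sigma^n|\le 1$. The sawtooth change of variables in $t$ yields
\[
\frac{1}{b-a}\int_{\mathbf{T}}\sigma^n(u,t)\,{\mathrm d}t=\int_0^1 F_n\bigl(F_n^{-1}(u)-\delta h_n\bigr)\,{\mathrm d}\delta+O(h_n),
\]
so the right-hand side converges. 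Helly's selection principle applied in $\delta$ extracts a subsequence along which $F_n(F_n^{-1}(u)-\delta h_n)\to g(u,\delta)$ a.e., with $g(u,\cdot)$ non-decreasing. The key step is to show $g(u,\cdot)$ is constant on $(0,1)$: I apply the $L^1$-hypothesis to the shifted family $\sigma^n(u,\,\cdot+\alpha h_n)$ for rational $\alpha\in(0,1)$. The shift by $\alpha h_n$ in $t$ corresponds exactly to the cyclic action $\delta\mapsto\delta+\alpha\bmod 1$ on the parameter, while $L^1$-continuity of translations (and $\alpha h_n\to 0$) ensures the shifted sequence has the same $L^1$-limit $\sigma(u,\cdot)$. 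Comparing the two forces $g(u,\delta+\alpha\bmod 1)=g(u,\delta)$ for a.e.~$\delta$ and every rational $\alpha$. The orbit $\{\delta_0+q\bmod 1:q\in\mathbb{Q}\}$ is dense in $[0,1]$, so $g(u,\cdot)$ is constant on a dense set; combined with monotonicity this forces $g(u,\cdot)$ to be constant at its continuity points. The common value $\sigma(u)$ does not depend on the subsequence (it is fixed by the already-converging integral), so the full sequence $F_n(F_n^{-1}(u)-\delta h_n)$ converges to $\sigma(u)$ for every $\delta\in(0,1)$. The direct implication then gives $\sigma^n(u,\cdot)\to\sigma(u)$ in $L^1(\mathbf{T})$; uniqueness of the $L^1$-limit yields $\sigma(u,t)=\sigma(u)$ for a.e.~$t$ and hence, via left-continuity in $u$, for all $t$.

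The hardest step is the $\delta$-constancy of the subsequential limit $g(u,\cdot)$: converting an $L^1$-in-$t$ statement into pointwise-in-$\delta$ constancy. It requires the combined use of $L^1$-continuity of translations, the cyclic action of the shift on the parameter, and monotonicity of $g(u,\cdot)$. Routine additional care is needed for the two boundary $h_n$-blocks at the endpoints of $\mathbf{T}$ and for choosing a common set of continuity points of $\sigma(\cdot,t)$ valid for a.e.~$t$.
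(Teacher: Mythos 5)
The paper does not actually prove this lemma: it is imported verbatim from \cite{Y1}, so there is no in-paper argument to compare yours against. Judged on its own merits, your proof is essentially correct. The structural identity $\sigma^n(u,t)=F_n\bigl(F_n^{-1}(u)-\delta(u,t,n)h_n\bigr)$ with $\delta(u,t,n)=\mathrm{frac}\bigl((F_n^{-1}(u)+\tau_t-\zeta)/h_n\bigr)$ checks out: for $\xi^n_{k-1}(t)<u\le \xi^n_k(t)$ one indeed has $F_n^{-1}(u)\in[\zeta-t_{j+k},\,\zeta-t_{j+k-1})$ because $F_n$ is continuous and non-increasing, and the sweep of $\delta$ over $[0,1)$ on each $h_n$-block is measure-preserving after rescaling. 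The sandwich between $\delta=\varepsilon$ and $\delta=1-\varepsilon$ plus the $2\varepsilon(b-a)+O(h_n)$ bound on the exceptional set settles the direct implication. For the converse, the combination of Helly in $\delta$, the observation that the shift $t\mapsto t+\alpha h_n$ acts as $\delta\mapsto\delta+\alpha\bmod 1$, $L^1$-continuity of translation, and monotonicity of the subsequential limit does force every subsequential limit to be the $\delta$-independent constant $\frac{1}{b-a}\int_{\mathbf{T}}\sigma(u,t)\,{\mathrm d}t$; the direct implication and uniqueness of $L^1$-limits then identify $\sigma(u,t)$ with that constant.

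Two points you dismiss as routine deserve more than a label, though neither is a genuine gap. First, upgrading the test-function hypothesis to $\sigma^n(u,\cdot)\to\sigma(u,\cdot)$ in $\mathbf{L_1}(\mathbf{T})$ is delicate precisely because the continuity points of $u\mapsto\sigma(u,t)$ depend on $t$: the clean route is to pass to a subsequence along which $\int z\,\sigma^n({\mathrm d}u,t)\to\int z\,\sigma({\mathrm d}u,t)$ for a.e.\ $t$ simultaneously for a countable dense family of $z$, deduce weak convergence of the measures for a.e.\ $t$, and then apply Fubini to the jointly measurable set of pairs $(u,t)$ at which $\sigma(\cdot,t)$ jumps (each $t$-section is countable) to conclude that for a.e.\ $u$ the pointwise convergence holds for a.e.\ $t$; dominated convergence and the subsequence principle give $\mathbf{L_1}(\mathbf{T})$-convergence for a dense set of $u$, which suffices for the rest of your argument. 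Second, your conclusion naturally reads ``$\sigma(u,t)=\sigma(u)$ for a.e.\ $t$'' (and then for all $u$ simultaneously on a full-measure set of $t$ by monotonicity and left-continuity in $u$); left-continuity in $u$ cannot upgrade ``a.e.\ $t$'' to ``all $t$'', but since $\sigma(u,\cdot)$ is only assumed to be an element of $\mathbf{L_1}(\mathbf{T})$ this is the correct reading of the statement.
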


\begin{prop}\label{ineq: phi}
Let $\varphi(z,x,u)$ be a solution of the equation (\ref{eq: phi}) with real-valued function $z$ such that $\left|z(x) - z(y) \right| \leq K_1  \left|x-y\right|$ and $\left|z(x)\right|\leq K_2 (1 + \left|x \right|)$ for all $x,y \in \mathbb{R}$. Then the following inequalities hold for all $x,y \in \mathbb{R}$, $u, v \in [0;1]$, $u<v$:
\begin{align}
& \left | \varphi(z,x,u)-\varphi(z,y,u)\right | \leq \left | x-y\right | \exp(K_1) \nonumber\\
& \left | \varphi(z,x,u) \right | \leq (\left | x\right | + K_2) \exp(K_2) \nonumber \\
& \left | \varphi(z,x,u) - x \right | \leq K_2 (\left | x\right | + 1) \exp(K_2)
%\label{phiInequalityItem3}
\nonumber\\
& \left | \varphi(z,x,u)-x - \varphi(z,y,u) + y \right | \leq \left | x-y\right | K_1 \exp(K_1)    %\label{phiInequalityItem4}
\nonumber\\
& \left | \varphi(z,x,u)- \varphi(z,x,v)\right | \leq K_2(1+(\left | x\right | + K_2) \exp(K_2))  \mu([u,v))
%\label{phiInequalityItem5}
\nonumber
\end{align}
\end{prop}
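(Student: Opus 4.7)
The plan is to establish all five inequalities directly from the integral equation (\ref{eq: phi}) by repeated application of a Gronwall-type lemma with respect to the probability measure $\mu$, exploiting $\mu([0;u))\le 1$. First, for the growth estimate, I would take absolute values in (\ref{eq: phi}) and use $|z(w)|\le K_2(1+|w|)$ to obtain
\begin{equation*}
1+|\varphi(z,x,u)|\le 1+|x|+K_2\int_{[0;u)}\bigl(1+|\varphi(z,x,v)|\bigr)\,\mu(dv),
\end{equation*}
after which Gronwall's lemma delivers $1+|\varphi(z,x,u)|\le(1+|x|)\exp(K_2)$, from which the stated second bound follows.

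For the Lipschitz-in-initial-condition estimate I would subtract the equations for $x$ and $y$, use the $K_1$-Lipschitz property of $z$, and apply Gronwall to
\begin{equation*}
|\varphi(z,x,u)-\varphi(z,y,u)|\le|x-y|+K_1\int_{[0;u)}|\varphi(z,x,v)-\varphi(z,y,v)|\,\mu(dv).
\end{equation*}
The fourth estimate then follows from the same subtraction, but retaining only the integral: $\varphi(z,x,u)-x-\bigl(\varphi(z,y,u)-y\bigr)$ equals $\int_{[0;u)}\bigl(z(\varphi(z,x,v))-z(\varphi(z,y,v))\bigr)\,\mu(dv)$, whose modulus I would bound by $K_1$ times the Lipschitz estimate just obtained and then use $\mu([0;u))\le1$ to extract a constant.

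The displacement estimate needs a small trick to avoid an extraneous factor of $\exp(K_2)$ that would otherwise appear from naively substituting the growth bound into the integrand. Instead I would write $|\varphi(z,x,v)|\le|x|+|\varphi(z,x,v)-x|$ inside the integral, yielding
\begin{equation*}
|\varphi(z,x,u)-x|\le K_2(1+|x|)+K_2\int_{[0;u)}|\varphi(z,x,v)-x|\,\mu(dv),
\end{equation*}
so that Gronwall produces the sharp bound $K_2(|x|+1)\exp(K_2)$. Finally, for the modulus-of-continuity estimate, the identity
\begin{equation*}
\varphi(z,x,v)-\varphi(z,x,u)=\int_{[u;v)}z(\varphi(z,x,s))\,\mu(ds)
\end{equation*}
together with $|z(\varphi(z,x,s))|\le K_2\bigl(1+|\varphi(z,x,s)|\bigr)$ and the already proven growth bound immediately gives $K_2\bigl(1+(|x|+K_2)\exp(K_2)\bigr)\mu([u;v))$.

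None of the individual steps is a serious obstacle; the proof rests entirely on a scalar Gronwall lemma for integrals against a bounded positive measure. The only subtle point is in the proof of the displacement estimate, where one must compare $|\varphi|$ against $|\varphi-x|$ under the integral sign rather than against the coarser uniform bound, in order to obtain the prefactor $K_2$ rather than $K_2\exp(K_2)$.
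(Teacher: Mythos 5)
Your proposal is correct and follows exactly the route the paper indicates: the paper's proof consists of the single remark that the inequalities follow from the definition of $\varphi(z,x,u)$ via the Gronwall inequality for Stieltjes integrals (citing Groh), which is precisely the scalar measure-Gronwall argument you carry out in detail. Your observation about comparing $|\varphi - x|$ rather than $|\varphi|$ under the integral to get the prefactor $K_2(|x|+1)\exp(K_2)$ in the third estimate is a correct and necessary refinement that the paper leaves implicit.
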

\begin{proof}
By using Gronwall inequality, see \cite{G}, these statements are easily shown from the definition of $\varphi(z,x,u)$.
\end{proof}
From now on we will denote by $C$ the constant which depends only on $M$, $K$, $|\mathbf{T}|$ and $V_a^b L$. It does not depend on $n$, $h_n$ and $t \in \mathbf{T}$ and its value can change from one formula to another.
\begin{prop}\label{ineq: x}
Let function $f$ be Lipschitz continuous with constant $M$ satisfying (\ref{eq: fg}). Then for solutions $x$ and $x_n$ of the equations (\ref{eq:limit int}) and (\ref{eq:representetieves system}), respectively, the following inequalities hold for all $t,\ s \in \mathbf{T}$, $t >s $ and $l,\ n \in \mathbb{N}$:
\begin{eqnarray}
\label{solutionInequalityItem1} & & \left | x(t) \right | \leq C (1+\left | x^0 \right |) \\
\label{solutionInequalityItem1a} & & \left | x_n(t) \right | \leq C (1+\left | x^0_n(\tau_t) \right |)\\
\label{solutionInequalityItem2}  & & \left | x(t)-x(s) \right | \leq C(1 + \left | x^0 \right |)V^t_{s}L  \nonumber \\
\label{solutionInequalityItem3} & & \left | x_n(t+l h_n)-x_n(t) \right | \leq C (1 + \left | x^0_n(\tau_t) \right | )V^{t+l h_n + \frac{1}{n}}_tL
\end{eqnarray}
\begin{proof}
The proof of these inequalities is standard and uses the definitions of $x$ and $x_n$, Gronwall inequality, inequality (\ref{eq: fg}), and Lipschitz continuity of $f$.
\end{proof}
\end{prop}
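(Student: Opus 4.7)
The plan is to derive each of the four bounds by a Gronwall-type argument that combines the Lipschitz/growth hypothesis (\ref{eq: fg}) on $f$, the finite variation of $L$ (or of its convolution $L_n$), and Proposition \ref{ineq: phi} to control the jump contributions.

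For (\ref{solutionInequalityItem1}) I would start from (\ref{eq:limit int}), split the right-hand side into the continuous piece $\int_a^t f(s,x(s))\,dL^c(s)$ and the jump sum, and bound the first piece by $K \int_a^t (1+|x(s)|)\,|dL^c(s)|$ via (\ref{eq: fg}). For each jump term I would invoke the third inequality of Proposition \ref{ineq: phi} applied with $z(\cdot) = \Delta L(s) f(s,\cdot)$, whose Lipschitz and growth constants are $M|\Delta L(s)|$ and $K|\Delta L(s)|$ respectively; this yields $|\varphi(\Delta L(s) f, x(s-), 1) - x(s-)| \le K|\Delta L(s)|(1+|x(s-)|)\exp(K V_a^b L)$. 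The one nontrivial point, which I expect to be the main obstacle, is that the set of jumps may be countably infinite and the exponential factors from Proposition \ref{ineq: phi} must aggregate cleanly; this works because $|\Delta L(s)| \le V_a^b L$ uniformly and $\sum_s |\Delta L(s)| \le V_a^b L < \infty$, so the whole jump contribution packages into a single Stieltjes integral against the jump part of the variation measure. Combining both pieces and applying Gronwall in Stieltjes form then produces $|x(t)| \le C(1+|x^0|)$.

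For (\ref{solutionInequalityItem1a}) I would work from the explicit recursion (\ref{eq:representetieves explicit}): convolution with $\widetilde{\rho}_n$ preserves the growth bound (\ref{eq: fg}) up to a multiplicative constant, so $|f_n(t_k,x_n(t_k))| \le C(1+|x_n(t_k)|)$; and $\sum_k |L_n(t_{k+1})-L_n(t_k)|$ is dominated by the total variation of $L_n$, which in turn does not exceed $V_a^{b+1/n} L \le V_a^b L$ because convolution with a nonnegative unit-mass kernel cannot increase total variation. A standard discrete Gronwall inequality then delivers the bound. The difference estimates (\ref{solutionInequalityItem2}) and (\ref{solutionInequalityItem3}) are immediate corollaries: one reruns the same computations with the integration/summation restricted to $(s;t]$ (respectively to $k \in \{m_t,\dots,m_t+l-1\}$), factors out the sup-bounds on $|x|$ and $|x_n|$ already established, and is left with exactly $V_s^t L$ (respectively $V_t^{t+lh_n+1/n} L$, where the enlargement by $1/n$ reflects the support of $\rho_n$ in (\ref{eq: Ln})).
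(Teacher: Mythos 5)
Your proposal is correct and follows essentially the same route the paper intends: the paper's proof is only the one-line remark that the bounds follow in a standard way from the definitions of $x$ and $x_n$, the growth condition (\ref{eq: fg}), Lipschitz continuity of $f$, and the Gronwall inequality, and your Stieltjes/discrete Gronwall arguments (with Proposition \ref{ineq: phi} controlling the jump terms and the $1/n$ enlargement accounting for the support of $\rho_n$) are precisely the standard filling-in of that outline.
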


\section{Proof of the main theorem}
In this section we will prove theorem \ref{theorem:necessary}.

Let us recall theorem's conditions. Function $L$ is right-continuous of bounded variation. For any Lipschitz function $f$ that has bounded growth, solution of equation (\ref{eq:representetieves system}) $x_n$ converges in $\mathbf{L_1}(\mathbf{T})$ as $n\rightarrow \infty$, $h_{n}\rightarrow
0$. Let us show that if $L$ has no discontinuity points, then limit of $x_n$ is a solution of integral equation (\ref{eq:limit int}); and if $L$ has at least one point of discontinuity, then there exists function $\sigma \in G$ such that $F_{n}(F^{-1}_{n}(u)-\delta h_{n})\rightarrow \sigma(u)$ as
$n\rightarrow \infty$, $h_{n}\rightarrow 0$, for all $\delta\in
(0,1)$ and for all continuity points $ u \in [0,1]$ of $\sigma$. Moreover limit of $x_n$ is a solution of integral equation (\ref{eq:limit int}) with measure $\mu$ generated by $\sigma$.

\begin{prop}\label{statement:x0}
In terms of theorem \ref{theorem:necessary} there exists $x_0 \in \mathbb{R}$ such that
\begin{equation}
\int_{\mathbf{T}}\left | x^0_n(\tau_{t})-x^0 \right | {\mathrm d} t \to 0.
\end{equation}
\end{prop}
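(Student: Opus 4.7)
My plan would be to apply the hypothesis of Theorem \ref{theorem:necessary} to the trivial Lipschitz function $f \equiv 0$, which vacuously satisfies (\ref{eq: fg}). For this choice the convolution $f_n$ is identically zero, so the recurrence in (\ref{eq:representetieves system}) and the explicit formula (\ref{eq:representetieves explicit}) collapse to $x_n(t) = x_n^0(\tau_t)$ for every $t \in \mathbf{T}$. The assumed $\mathbf{L_1}(\mathbf{T})$-convergence then immediately produces $g \in \mathbf{L_1}(\mathbf{T})$ with $\int_{\mathbf{T}} |x_n^0(\tau_t) - g(t)|\,{\mathrm d}t \to 0$, and the entire proposition reduces to showing that $g$ is a.e.\ equal to a constant $x^0 \in \mathbb{R}$.

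To force constancy of $g$ I would exploit the periodic structure of the map $t \mapsto \tau_t$: on each $[a + k h_n; a+(k+1)h_n)$ the function $t \mapsto x_n^0(\tau_t)$ is just a translate of $x_n^0$ restricted to $[a; a+h_n)$. Setting $c_n := h_n^{-1}\int_a^{a+h_n} x_n^0(s)\,{\mathrm d}s$ and $M_n(\gamma) := [(\gamma - a)/h_n]$, a direct change of variables in the $k$-th piece would yield, for every $\gamma \in (a;b]$,
$$\int_a^\gamma x_n^0(\tau_t)\,{\mathrm d}t = M_n(\gamma)\, h_n\, c_n + r_n(\gamma), \qquad |r_n(\gamma)| \le \int_a^{a+h_n} |x_n^0(s)|\,{\mathrm d}s.$$
Since $\mathbf{L_1}$-convergence implies uniform $\mathbf{L_1}$-boundedness of $x_n^0(\tau_t)$, the same change of variables applied to $|x_n^0|$ gives $M_n(b) \int_a^{a+h_n} |x_n^0| \le C$, so $\int_a^{a+h_n}|x_n^0| = O(h_n)$. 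Consequently $r_n(\gamma) \to 0$ uniformly in $\gamma$, and the sequence $\{c_n\}$ is bounded.

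To conclude, I would first take $\gamma = b$: the $\mathbf{L_1}$-convergence together with $M_n(b)h_n \to b-a$ would pin down $c_n \to x^0 := (b-a)^{-1}\int_{\mathbf{T}} g$. For arbitrary $\gamma \in (a;b]$ the same identity, combined with $M_n(\gamma) h_n \to \gamma - a$, would yield $\int_a^\gamma g(t)\,{\mathrm d}t = (\gamma - a)\, x^0$, and Lebesgue differentiation would then deliver $g \equiv x^0$ almost everywhere on $\mathbf{T}$, which is the desired claim. The only step that requires any care is controlling the boundary remainder $r_n(\gamma)$ uniformly, but as sketched this follows at once from $\mathbf{L_1}$-boundedness, so I do not expect any genuine obstacle beyond routine bookkeeping.
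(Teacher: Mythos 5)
Your proposal is correct and follows essentially the same route as the paper: set $f\equiv 0$ so that $x_n(t)=x_n^0(\tau_t)$, exploit the $h_n$-periodicity of $t\mapsto\tau_t$ to reduce everything to the one-period average $c_n=h_n^{-1}\int_a^{a+h_n}x_n^0(s)\,{\mathrm d}s$, and identify the $\mathbf{L_1}$ limit as the constant $x^0=\lim_n c_n$. The only difference is cosmetic: you test against indicators of intervals $[a;\gamma]$ and invoke Lebesgue differentiation, whereas the paper tests against continuous functions $g$ and uses dominated convergence; your explicit control of the boundary remainder $r_n(\gamma)$ is, if anything, slightly more careful than the paper's.
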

\begin{proof}
As it was shown above, each point $t$ can be represented as $t=\tau_{t}+m_{t}h_{n}$, where $\tau_{t}\in [a, a+h_{n}]$,  $m_{t}\in \mathbb{N}$ and solution of equation (\ref{eq:representetieves system}) can be written as follows:
\begin{equation}\tag{\ref{eq:representetieves explicit}}
x_{n}(t)=x_{n}(\tau_{t})+\sum^{m_{t}-1}_{k=0}f_{n}(t_{k},x_{n}(t_{k}))[L_{n}(t_{k+1})-L_{n}(t_{k})].
\end{equation}
Set $f\equiv0$, then $ x_n(t)= x_n(\tau_{t}) = x^0_n(\tau_{t}).$
According to conditions of theorem (\ref{theorem:necessary}), $x_n$ converges in $\mathbf{L_1(\mathbf{T})}$, therefore $x^0_n(\tau_{t})$ converges. Taking an arbitrary function $g \in \mathbf{C(\mathbf{T})}$ consider the following limit:
$$
I=\lim_{n \to \infty}\int^b_a g(t) x^0_n(\tau_{t}) {\mathrm d} t = \lim_{n \to \infty} \sum^{[\frac{b-a}{h_n}]}_{k=1} \int^{a + k h_n}_{a + (k-1) h_n} g(t) x^0_n(a + h_n \{\frac{t-a}{h_n}\}) {\mathrm d} t.
$$
Directly from the definition of fractional part we obtain
\begin{eqnarray*}
I &= \lim\limits_{n \to \infty} \int\limits^1_0 \sum\limits^{[\frac{b-a}{h_n}]}_{k=1} g(h_n s + a + (k-1) h_n) h_n x^0_n(a + h_n s) {\mathrm d} s.
\end{eqnarray*}
Since function $g$ is bounded on $\mathbf{T}$ then by Lebesgue dominated convergence theorem we have
\begin{eqnarray*}
I &=& \lim_{n \to \infty} \int^1_0 \int^b_a g(t) {\mathrm d} t \hspace{5pt}x^0_n(a + h_n s) {\mathrm d} s = \int^b_a g(t) {\mathrm d} t\lim_{n \to \infty} \int^1_0 x^0_n(a + h_n s) {\mathrm d} s .
\end{eqnarray*}
Denote $x^0 = \lim_{n \to \infty} \int^1_0 x^0_n(a + h_n s) {\mathrm d} s$, then
\begin{eqnarray*}
I &=& \int^b_a g(t) x^0 {\mathrm d} t.
\end{eqnarray*}

Since $x^0_n(\tau_{t})$ converges in $\mathbf{L_1(\mathbf{T})}$ then $x^0_n(\tau_{t}) \to x^0$ in $\mathbf{L_1(\mathbf{T})}$.
\end{proof}

\begin{prop}\label{statement:L^c}
Theorem \ref{theorem:necessary} holds for continuous function $L$.
\end{prop}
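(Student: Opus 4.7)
\emph{Proof plan.}
The plan is to identify the $\mathbf{L_1}$-limit $x$ of $x_n$ with the unique solution of (\ref{eq:limit int}). Since $L$ is continuous, $L^c = L$ and $\Delta L \equiv 0$, so (\ref{eq:limit int}) collapses to
\[
x(t) = x^0 + \int_a^t f(s, x(s))\,\mathrm{d}L(s),
\]
whose solution is unique by \cite{YN}. Verifying that $x$ satisfies this equation therefore suffices to conclude.

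Starting from the explicit formula (\ref{eq:representetieves explicit}), I would decompose $x_n(t) = x_n^0(\tau_t) + I_n(t)$, with $I_n$ the telescoping Stieltjes-type sum; Proposition~\ref{statement:x0} disposes of the free term, reducing the task to showing $I_n \to \int_a^\cdot f(s,x(s))\,\mathrm{d}L(s)$ in $\mathbf{L_1}(\mathbf{T})$. I would carry this out in three stages. First, replace $f_n$ by $f$: by (\ref{eq: fn}) and Lipschitz continuity of $f$, $|f_n - f| = O(1/n)$ uniformly on compacts, so the induced error is controlled via (\ref{solutionInequalityItem1a}) and $V_a^b L$. Second, replace $L_n$ by $L$: uniform continuity of $L$ on $\mathbf{T}$ together with $\mathrm{supp}\,\rho_n \subseteq [0,1/n]$ forces $\sup_t|L_n - L| \to 0$, which via Abel summation translates into a vanishing error in $I_n$. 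Third, interpret the surviving sum as $\int_a^t g_n(s)\,\mathrm{d}L(s)$ where $g_n(s) = f(t_k, x_n(t_k))$ on $[t_k, t_{k+1})$; estimate (\ref{solutionInequalityItem3}) combined with Lipschitz continuity of $f$ yields $|g_n(s) - f(s, x_n(s))| \le C(h_n + V_{t_k}^{t_k + h_n + 1/n} L)$, and continuity of $L$ makes the variation function $V_a^\cdot L$ uniformly continuous on $\mathbf{T}$, so this bound vanishes uniformly as $h_n, 1/n \to 0$.

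The main obstacle is the final step, passing from $\int_a^t f(s, x_n(s))\,\mathrm{d}L(s)$ to $\int_a^t f(s, x(s))\,\mathrm{d}L(s)$, because $x_n \to x$ is known only in $\mathbf{L_1}(\mathrm{d}t)$ whereas $\mathrm{d}L$ may be mutually singular with Lebesgue measure. I would address this by routing through the mollified measures $\mathrm{d}L_n$, which are absolutely continuous with density of total mass at most $V_a^b L$: the convolution identity $\int g\,\mathrm{d}L_n = \int (g * \check{\rho}_n)\,\mathrm{d}L$, together with uniform convergence $L_n \to L$ and the $\mathbf{L}_\infty$-bound (\ref{solutionInequalityItem1a}) on $f(\cdot, x_n)$, allows one to interchange $\mathrm{d}L$ and $\mathrm{d}L_n$ in the limit; Lipschitz continuity of $f$ then transfers $\mathbf{L_1}(\mathrm{d}t)$-convergence of $x_n$ into convergence of the integrals against $\mathrm{d}L_n$. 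Once $x$ is shown to satisfy the integral equation, uniqueness from \cite{YN} identifies the limit and completes the proposition.
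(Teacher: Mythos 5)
Your overall strategy --- take the $\mathbf{L_1}$-limit $x$ of $x_n$, prove that $x$ satisfies the integral equation by passing to the limit term by term, and then invoke uniqueness from \cite{YN} --- is genuinely different from the paper's. The paper never passes to the limit inside the Stieltjes integral: it takes the known unique solution $y$ of $y(t)=x^0+\int_a^t f(s,y(s))\,{\mathrm d}L^c(s)$, bounds $|x_n(t)-y(t)|$ by $|x_n^0(\tau_t)-x^0|+C/n+C(1+|x_n^0(\tau_t)|+|x^0|)\sup_{|u-v|\le h_n+1/n}V_u^vL^c$ plus the discrete convolution term $C\sum_k|x_n(t_k)-y(t_k)|\,[L^c(t_{k+1})-L^c(t_k)]$, and closes the estimate with the Gronwall inequality, using continuity of $L$ only through $\sup_{|u-v|\le h_n+1/n}V_u^vL^c\to 0$. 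That route completely sidesteps the obstacle you (correctly) single out as the main difficulty.

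And that obstacle is where your argument has a genuine gap. Your proposed fix --- route through the mollified measures ${\mathrm d}L_n$ and claim that ``Lipschitz continuity of $f$ transfers $\mathbf{L_1}({\mathrm d}t)$-convergence of $x_n$ into convergence of the integrals against ${\mathrm d}L_n$'' --- does not work as stated: the density of ${\mathrm d}L_n$ has total mass at most $V_a^bL$ but is not uniformly bounded in the sup norm (it is of order $n\,V_{s}^{s+1/n}L$ locally), so $\int|x_n-x|\,{\mathrm d}L_n$ is not controlled by $\|x_n-x\|_{\mathbf{L_1}({\mathrm d}t)}$; for a continuous $L$ of Cantor type the mass of ${\mathrm d}L_n$ can concentrate precisely on a Lebesgue-small set where $x_n$ and $x$ differ by order one. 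Likewise, the identity $\int g\,{\mathrm d}L_n=\int(g*\check\rho_n)\,{\mathrm d}L$ only helps for integrands that are continuous uniformly in $n$, which $f(\cdot,x_n(\cdot))$ is not a priori. If you want to salvage your route rather than switch to the paper's Gronwall comparison, the missing ingredient is an equicontinuity upgrade: inequality (\ref{solutionInequalityItem3}) gives $|x_n(t+lh_n)-x_n(t)|\le C(1+|x_n^0(\tau_t)|)V_t^{t+lh_n+1/n}L$, and since $t\mapsto V_a^tL$ is continuous when $L$ is, this lets you promote $\mathbf{L_1}({\mathrm d}t)$-convergence to locally uniform convergence of $x_n$, after which $\int|f(s,x_n(s))-f(s,x(s))|\,{\mathrm d}V_a^sL\le M\|x_n-x\|_\infty V_a^bL\to0$. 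As written, however, the final limit passage is not justified.
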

\begin{proof}

Let $y$ be a solution of integral equation (\ref{eq:limit int}). Since $L$ is continuous, then $L^c=L$ and equation (\ref{eq:limit int}) can be represented as follows:
$$
y(t)=x^{0}+\int^{t}_{a}f(s,y(s)){\mathrm d}L^{c}(s),
$$
where $x^{0}$ -- limit of initial conditions $x^0_n(\tau_{t})$. According to the proposition \ref{statement:x0}, value $x^0$ exists and it is unique.

Let us show that $\int^b_a \left | x_n(t) - y(t) \right | {\mathrm d} t \to 0$ as $n \to \infty$. Then due to the limit uniqueness, $x$ will have a required format.

Taking into account explicit form of $x_n$ and $y$, inequalities (\ref{solutionInequalityItem1}), (\ref{solutionInequalityItem1a}) and also properties of $f_n$ and $f$ with the help of standard methods one can show that
\begin{equation*}
\begin{split}
&\left | x_n(t) - y(t) \right | \leq \left | x^0_n(\tau_t) - x^0 \right | + \\
&\qquad+ \left | \sum_{k=0}^{m_t-1}f_n(t_k,x_n(t_k))[L_n^c(t_{k+1})-L_n^c(t_{k})]-\int_{a}^{t}f(s,y(s)){\mathrm d}L^c(s) \right |\leq\\
&\leq \left | x^0_n(\tau_t) - x^0 \right | + C/n +C(1+\left | x_n^0(\tau_t)\right | + \left | x^0 \right |)\sup_{\left | u-v  \right |\leq h_n+1/n}V_u^vL^c+\\
& \qquad + C\sum_{k=0}^{m_t-1}\left | x_n(t_k)-y(t_k) \right |[L^c(t_{k+1})-L^c(t_k)].
\end{split}
\end{equation*}
Applying Gronwall inequality to the above expression we obtain that:
\begin{equation*}
\begin{split}
\left | x_n(t) - y(t) \right | &\leq [\left | x^0_n(\tau_t) - x^0 \right | + C/n +\\
&+C(1+\left | x_n^0(\tau_t)\right | + \left | x^0 \right |)\sup_{\left | u-v  \right |\leq h_n+1/n}V_u^vL^c] \exp(C V^b_aL^c).
\end{split}
\end{equation*}
Taking integral over $\mathbf{T}$ and considering the limit as $n \to \infty$ we obtain that $$\int^b_a \left | x_n(t) - y(t) \right | {\mathrm d} t \to 0.$$
Thus theorem \ref{theorem:necessary} holds for continuous function $L$.
\end{proof}

Now let us show that theorem \ref{theorem:necessary} holds for discontinuous function $L$.
In order to emphasise that solution of equation (\ref{eq:representetieves system}) depends on function $f$, we will denote it as $x_n = x_n (f)$, and its value at point $t$ as $x_n(t) = x_n(f,t)$.
Thus we need to prove that if for any Lipschitz function $f$ of a bounded growth sequence $x_n (f)$ of solutions of equation
(\ref{eq:representetieves system}) converges in $\mathbf{L_1(T)}$, then
$F_{n}(F^{-1}_{n}(u)-\delta h_{n})$ converges for all $\delta\in
(0,1)$ and almost all (in general, for all except for no more than countable number of points) $u \in [0,1]$.

% Schema of the proof consists of next key steps:
% \begin{enumerate}
%  \item Defining $x^+(\zeta)$ and $x^-(\zeta)$ -- right and left limits of solutions $x_n$ at point $\zeta$;
%  \item Showing that jump of solutions' limit at discontinuity point $\zeta$ of $L$ can be described by expression $\varphi^{n}_{p+2}(\Delta L(\zeta) f(\zeta,\cdot),x^- (\zeta),t)$, where $\varphi^{n}_k(z,x,t)$ is defined by (\ref{eq: varphi}) with $\xi^n_k (t) = F_n (\zeta - t_{j+k}(t))$ and $z(y)=\Delta L(\zeta_i) f_i(t_{j_i},y)$;
%  \item Prooving that $\varphi^{n}_{p+2}(\Delta L(\zeta) f(\zeta,\cdot),x^0,t)$ converges in $\mathbf{L_1(T)}$;
%  \item Applying lemma \ref{lemma: obratnaya} to satisfy required conditions;
%  \item Applying theorem \ref{theorem:suficient} to finish the proof.
% \end{enumerate}

%\textbf{\textit{X+}}

For any $t\in \mathbf{T}$ define $t_k=t_k(t)=\tau_t+k h_n$, $k=0,1,2,\ldots$ as above. Pick any point $\zeta \in \mathbf{T}$. Let $q \in \mathbb{N}$ be a number such that
\begin{equation}\label{t_q}
t_{q-1}(t)< \zeta \le t_q(t).
\end{equation}
Notice that $t_q(t+h_n)=t_q(t)$, i.e. $t_q(t)$ is a periodical function with period $h_n$.

\begin{prop}\label{prop:x^+}
For any Lipschitz continuous function $f$ of a bounded growth and for each $\zeta \in \mathbf{T}$ there exists $x^+(\zeta) \in \mathbb{R}$ such that
\begin{equation}\label{x^+}
\lim_{n\to \infty}\int_{\zeta}^b |x_n(t_q (t)) -
x^+(\zeta)| d t = 0.
\end{equation}
\end{prop}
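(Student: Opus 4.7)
First, I would exploit the $h_n$-periodicity of $y_n(t) := x_n(t_q(t))$. Since $\tau_{t+h_n}=\tau_t$ and the index $q$ from (\ref{t_q}) depends on $t$ only through $\tau_t$, we have $t_q(t+h_n)=t_q(t)$, so $y_n$ is $h_n$-periodic on $[\zeta,b]$. Moreover, restricted to any full grid interval $I_k=[a+kh_n,a+(k+1)h_n)\subset[\zeta,b]$, the map $t\mapsto t_q(t)$ is a piecewise translation of unit Jacobian bijecting $I_k$ onto $[\zeta,\zeta+h_n)$. Consequently, the desired convergence $\int_\zeta^b|y_n(t)-c|\,dt\to 0$ is equivalent to
$$\frac{1}{h_n}\int_\zeta^{\zeta+h_n}|x_n(s)-c|\,ds\to 0.$$

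Next, I would identify the candidate constant $x^+(\zeta)$ by a test-function computation modeled on the proof of Proposition \ref{statement:x0}. For arbitrary $g\in C([\zeta,b])$, the bijection above together with the uniform continuity of $g$ yields
$$\int_\zeta^b g(t)y_n(t)\,dt \;=\; \Bigl(\int_\zeta^b g(t)\,dt\Bigr)\cdot A_n + o(1), \qquad A_n:=\frac{1}{h_n}\int_\zeta^{\zeta+h_n}x_n(s)\,ds.$$
Independently, (\ref{eq:representetieves explicit}) gives $y_n(t)=x_n(t)-\Delta_n(t)$ with $\Delta_n(t):=\sum_{j=q}^{m_t-1}f_n(t_j,x_n(t_j))(L_n(t_{j+1})-L_n(t_j))$. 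A Fubini-type interchange of summation and integration in $\int g(t)\Delta_n(t)\,dt$ rewrites it as a weighted Riemann--Stieltjes sum with weight $G(s):=\int_s^b g(t)\,dt$; combined with the $L_1$-convergence of $x_n$ and the estimates of Proposition \ref{ineq: x}, this produces a definite limit for $\int g\cdot y_n\,dt$, and hence a limit $x^+(\zeta)\in\mathbb{R}$ for $A_n$, with $\int g\cdot y_n\,dt\to x^+(\zeta)\int g$ for every $g$.

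Finally, to upgrade this weak-type convergence to genuine $L_1$ convergence I would use Proposition \ref{ineq: x} to bound the oscillation of $x_n$ on a single grid within $[\zeta,\zeta+h_n]$ by $C(1+|x_n^0(\tau)|)V_\zeta^{\zeta+h_n+1/n}L$, which vanishes because $L$ is right-continuous; the mismatch between the two distinct $\tau$-pieces comprising $x_n$ on $[\zeta,\zeta+h_n]$ is then controlled in $L_1$-average by the regularity of $x_n^0$ near $a$ given by Proposition \ref{statement:x0}. Combined with convergence of $A_n$, this yields $\frac{1}{h_n}\int_\zeta^{\zeta+h_n}|x_n(s)-x^+(\zeta)|\,ds\to 0$, which by the periodicity reduction proves the claim.

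The main obstacle is precisely this last stage: $h_n$-periodic bounded sequences can converge weakly without converging strongly in $L_1$, and the weak-convergence output of the test-function argument is \emph{a priori} insufficient. The crucial technical input is the vanishing oscillation of $x_n$ on the micro-interval $[\zeta,\zeta+h_n]$, which couples the right-continuity of the driving function $L$ with the averaged regularity of the initial datum $x_n^0$ near $a$ obtained in Proposition \ref{statement:x0}.
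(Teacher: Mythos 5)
Your periodicity reduction is sound (the paper itself notes that $t_q(t)$ is $h_n$-periodic), but both of your subsequent steps have genuine gaps. First, the identification of the candidate limit: you assert that rewriting $\int g(t)\Delta_n(t)\,{\mathrm d}t$ as a weighted Riemann--Stieltjes sum ``produces a definite limit''. It does not. The limit of sums of the form $\sum_k f_n(t_k,x_n(t_k))(L_n(t_{k+1})-L_n(t_k))$ near a jump of $L$ is exactly the indeterminate quantity the whole paper is about: it depends on how the grid $\{\tau_t+kh_n\}$ interlaces with the mollification window of width $1/n$, i.e.\ on the behaviour of $F_n(F_n^{-1}(u)-\delta h_n)$, whose convergence is the \emph{conclusion} of Theorem \ref{theorem:necessary}, not something available at this stage. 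Writing $\Delta_n=x_n-y_n$ and invoking the $L_1$-convergence of $x_n$ only moves the unknown back to $\lim\int g\,y_n$, so this part of the argument is circular.

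Second, and more seriously, your upgrade from weak to strong convergence fails. On the micro-interval $[\zeta,\zeta+h_n)$ each grid $\{\tau+kh_n\}$ contributes exactly one point, so ``the oscillation of $x_n$ on a single grid within $[\zeta,\zeta+h_n]$'' is vacuous, and inequality (\ref{solutionInequalityItem3}) --- which compares $x_n$ at two points of the \emph{same} grid --- says nothing about the relevant oscillation, which is entirely across different grids. That cross-grid mismatch is not controlled by the regularity of $x_n^0$ near $a$: two grids with identical initial data can arrive near $\zeta$ with different values if $L$ has a jump in $(a,\zeta)$, because they sample the smeared jump differently. The only available control is to propagate the macroscopic $L_1$-convergence hypothesis into the micro-interval along each grid, and that requires bounding the solution increments over a window $[\zeta,\zeta+\delta]$ of \emph{fixed} width to the right of $\zeta$ (where $V_\zeta^{\zeta+\delta}L$ is small by right-continuity), with the limits taken in the order $n\to\infty$ first, then $\delta\to 0$. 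The paper does this by exploiting the ``for each Lipschitz $f$'' quantifier, which you never use: it applies the hypothesis to the truncated coefficient $f^\delta=f\cdot g(\delta,\zeta,\cdot)$, so that $x_n^{\delta}$ literally freezes to an $h_n$-periodic function after $\zeta+\delta$ whose $L_1$-limit must be a constant $c^\delta$, and then shows $|x_n(t_q(t))-c^\delta_n(t)|\le C(1+|x_n^0(\tau_t)|)V_\zeta^{\zeta+\delta+h_n+1/n}L$ before letting $\delta\to 0$. A repair of your direct route is conceivable (compare $x_n(s)$, $s\in[\zeta,\zeta+h_n)$, with its translates $x_n(s+lh_n)$ for $lh_n\le\delta$ via (\ref{solutionInequalityItem3}), average over $l$, and use the $L_1$-convergence of $x_n$ on $[\zeta,\zeta+\delta]$ together with the right limit at $\zeta$ of the limit function), but that is a different argument from the one you wrote, and the two-parameter limit is essential to it.
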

\begin{proof}

% Let us show that for any Lipschitz function $f$ satisfying (\ref{eq: fg}) there exists number $x^+ (\zeta) = x^+ (\zeta, f)$
% such that $\int_{\zeta}^b |x_n(t_q (t)) - x^+(\zeta)| d t \to 0$.

For each $\delta \in
(0;1)$ we denote by $g(\delta,\zeta,t)$ the following function
\begin{equation*}
g(\delta,\zeta,t) = \left\{ \begin{array}{ll}
 1, \mbox{ if } t \le \zeta + \delta/2, \\
2 (\delta + \zeta - t)/\delta, \mbox{ if }  \zeta + \delta/2 < t \le \zeta + \delta,\\
 0, \mbox{ if } t > \zeta + \delta.
\end{array} \right.
\end{equation*}

Set $f^\delta (t,x) = f(t,x) g(\delta,\zeta,t) $, then
\begin{equation*}
f^\delta (t,x) = \left\{ \begin{array}{ll}
                          f(t,x), \mbox{ if }t \le \zeta + \delta/2 \\
              0, \mbox{ if }t \ge \zeta + \delta.
                         \end{array}\right.
\end{equation*}
Moreover, function $f^\delta$ is Lipschitz and it satisfies (\ref{eq: fg}) with the same constant as $f$ does.

Let $x_n^\delta = x_n(f^\delta)$ and $x_n = x_n(f)$ be a solutions of equation (\ref{eq:representetieves system}) with functions $f^\delta$ and $f$ respectively.

Note that  $x_n^\delta (t) = x_n(t)$ if $t < \zeta
+\delta/2 - 1/n$ and $x_n^\delta (t_q(t)) = x_n(t_q(t))$ for large enough $n$ and $t \in \mathbf{T}$.
Moreover, $x_n^\delta (t) = c_n^\delta(t)$
if $t > \zeta +\delta$ where $c_n^\delta$ is some $h_n$-periodic function.
Now $x_n^\delta$ converges to $x^\delta$ in
$\mathbf{L_1(T)}$, therefore sequence of functions $c_n^\delta$ also converges in $\mathbf{L_1([\zeta +\delta,b])}$.
Denote limit of $c_n^\delta$ by $c^\delta$. The same arguments as in proposition \ref{statement:x0} imply that function $c^\delta$ does not depend on $t$, i.e. it is a constant.

Let us show that number sequence $c^\delta$ converges as $\delta \to 0$. Consider arbitrary numbers $\delta_1, \delta_2 \in (0,1)$ and correspondent functions $f^{\delta_1}$,
$f^{\delta_2}$. Notice that $f_n^{\delta_1}(t,x)=f_n^{\delta_2}(t,x)$ only if $t \not\in (\zeta
+\min\{\delta_1,\delta_2\}/2 - 1/n, \zeta + \max\{\delta_1,\delta_2\}]$. Let $r$ and $r^*$ be numbers such that $t_{r-1}\leq \zeta
+\min\{\delta_1,\delta_2\}/2 - 1/n <t_r \leq t_{r^*}\leq \zeta
+\max\{\delta_1,\delta_2\}<t_{r^*+1}$. Then from equality (\ref{eq:representetieves explicit}) we get
\begin{equation*}
\begin{split}
\left | x_n^{\delta_1}(t) - x_n^{\delta_2}(t) \right | &= \sum^{m_t-1}_{k=0}[f^{\delta_1}_n(t_k,x_n^{\delta_1}(t_k))-f^{\delta_2}_n(t_k,x_n^{\delta_2}(t_k))][L_n(t_{k+1})-L_n(t_k)]= \\
&=\sum^{r^*}_{k=r}[f^{\delta_1}_n(t_k,x_n^{\delta_1}(t_k))-f^{\delta_2}_n(t_k,x_n^{\delta_2}(t_k))][L_n(t_{k+1})-L_n(t_k)].
\end{split}
\end{equation*}
Using inequality (\ref{solutionInequalityItem3}) by standard methods one can show that
\begin{equation*}
\begin{split}
\left | x_n^{\delta_1}(t) - x_n^{\delta_2}(t) \right | &\leq C (1+\left | x_n(t_r)\right |)V_{\zeta - 1/n +\min\{\delta_1,\delta_2\}/2}^{\zeta + \max\{\delta_1,\delta_2\} +h_n}L_n \leq\\
& \leq C (1+\left | x_n^0(\tau_t)\right |)V_{\zeta-1/n+\min\{\delta_1,\delta_2\}/2}^{\zeta +
\max\{\delta_1,\delta_2\} + h_n + 1/n}L.
\end{split}
\end{equation*}
Taking integral over interval $[\zeta
+\max\{\delta_1,\delta_2\}, b]$ and tending $n \to
\infty$ we obtain that
\begin{equation*}
\begin{split}
&\left | c^{\delta_1} - c^{\delta_2}\right | (b - \zeta -
\max\{\delta_1,\delta_2\}) = \lim_{n \to \infty}\int_{\zeta
+\max\{\delta_1,\delta_2\}}^{b}\left | x_n^{\delta_1}(t) -
x_n^{\delta_2}(t) \right | {\mathrm d} t\leq\\
&\leq \lim_{n \to \infty} C \int_{\zeta
+\max\{\delta_1,\delta_2\}}^{b}(1+\left | x_n^0(\tau_t)\right |){\mathrm d} t
 \hspace{4pt} V_{\zeta-1/n+\min\{\delta_1,\delta_2\}/2}^{\zeta +
\max\{\delta_1,\delta_2\} + h_n + 1/n}L  \leq\\
&\leq C (1+\left | x^0\right |) V_{\zeta+\min\{\delta_1,\delta_2\}/2}^{\zeta +
\max\{\delta_1,\delta_2\}}L.
\end{split}
\end{equation*}
Since $L$ is a right-continuous function, above inequality implies that $c^\delta$ is a Cauchy sequence. Thus there exists $\lim_{\delta \to
0}c^\delta = x^+(\zeta)$.

Now let us recall that $c^\delta_n$ is $h_n$-periodic function and $x_n^\delta (t_q(t)) = x_n(t_q(t))$ for large enough $n$. Then
$$ |x_n(t_q (t)) - c^\delta_n(t_q (t))| = |x^\delta_n(t_q (t)) - c^\delta_n(t)|.$$
Since for $t > \zeta + \delta$ we have that $f^\delta(t,x)=0$ and $x^\delta_n(t)=c^\delta_n(t)$ then using inequality (\ref{solutionInequalityItem1a}) we deduce that
\begin{equation*}
\begin{split}
| x_n^\delta (t_q (t)) - c^\delta_n(t)| &= | x_n^\delta (t_q (t)) - x^\delta_n(t)|
\le C (1+\left | x^\delta_n(t_q (t))\right |)V_{\zeta}^{\zeta + \delta + h_n
+ 1/n} L \leq\\
&\leq C (1+\left | x^0_n(\tau_t)\right |)V_{\zeta}^{\zeta + \delta + h_n
+ 1/n} L.
\end{split}
\end{equation*}
Thus we obtain the following inequality
\begin{equation*}
\begin{split}
|x_n(t_q (t)) - x^+(\zeta)| &\le | x_n^\delta (t_q (t)) - c^\delta_n(t)|
+ |c^\delta_n(t) - x^+(\zeta)| \le\\
&\le C (1+\left | x_n^0(\tau_t)\right |)V_{\zeta}^{\zeta + \delta + h_n
+ 1/n} L + |c^\delta_n(t) - x^+(\zeta)|.
\end{split}
\end{equation*}
Taking integral over interval $[\zeta+\delta,
b]$  and letting $n \to \infty$ we have

\begin{equation*}
\begin{split}
&\overline{\lim}_{n\to \infty}\int_{\zeta+\delta}^b|x_n(t_q (t)) - x^+(\zeta)| {\mathrm d} t \le\\
&\le C (1+\left | x^0\right |)V_{\zeta}^{\zeta + \delta} L + \lim_{n\to \infty}\int_{\zeta+\delta}^b|c^\delta_n(t) - c^\delta|{\mathrm d} t +|c^\delta - x^+(\zeta)|(b-\zeta-\delta)=\\
&= C (1+\left | x^0\right |)V_{\zeta}^{\zeta + \delta} L +|c^\delta - x^+(\zeta)|(b-\zeta-\delta).
\end{split}
\end{equation*}
Tending $\delta \to 0$ we obtain required equality
\begin{equation*}
\lim_{n\to \infty}\int_{\zeta}^b |x_n(t_q (t)) -
x^+(\zeta)| d t = 0.
\end{equation*}
\end{proof}

%\textbf{\textit{X-}}
Now for $t \in \mathbf{T}$ and $m \in \mathbb{N}$ let $t_j(t,m)=\tau_t + jh_n$ be such a point that
\begin{equation}\label{t_j}
t_{j}(t,m) < \zeta - \frac{1}{m} \le t_{j+1}(t,m).
\end{equation}
Notice that $t_{j}(t+h_n,m)=t_{j}(t,m)$ for all $t \in \mathbf{T}$.

\begin{prop}\label{prop:x^-}
For any Lipschitz continuous function $f$ of a bounded growth and for each $\zeta \in \mathbf{T}$ there exists $x^-(\zeta)$ such that
\begin{equation}\label{x^-}
\lim_{n\to \infty}\int_{\zeta}^b |x_n(t_{j} (t,n)) -x^-(\zeta)| d t = 0.
\end{equation}
\end{prop}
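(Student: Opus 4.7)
The plan is to mirror the proof of Proposition~\ref{prop:x^+} but with the cutoff placed on the \emph{left} of $\zeta$, exploiting the fact that $t_j(t,n)<\zeta-1/n$ forces the convolved increments $L_n(t_{k+1})-L_n(t_k)$ for $k\le j$ to depend only on values of $L$ strictly to the left of $\zeta$, so they never see a possible jump of $L$ at $\zeta$.

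I would first introduce, for each $\delta\in(0,1)$, a Lipschitz cutoff $g^-(\delta,\zeta,\cdot)$ equal to $1$ on $(-\infty,\zeta-\delta]$, linear from $1$ to $0$ on $[\zeta-\delta,\zeta-\delta/2]$, and vanishing on $[\zeta-\delta/2,+\infty)$, and set $f^\delta(t,x):=f(t,x)g^-(\delta,\zeta,t)$. Denote by $x_n^\delta$ the solution of (\ref{eq:representetieves system}) with $f$ replaced by $f^\delta$. From the definition of $f_n^\delta$ one sees that $f_n^\delta(t_k,\cdot)=f_n(t_k,\cdot)$ when $t_k\le\zeta-\delta-1/n$ and $f_n^\delta(t_k,\cdot)\equiv 0$ when $t_k\ge\zeta-\delta/2$. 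Consequently $x_n^\delta$ coincides with $x_n$ at grid points up to $\zeta-\delta-1/n+h_n$, and becomes $h_n$-periodic past $\zeta-\delta/2$, i.e.\ $x_n^\delta(t)=c_n^\delta(t)$ on the tail. The hypothesis of Theorem~\ref{theorem:necessary} together with the averaging argument of Proposition~\ref{statement:x0} then yields a constant $c^\delta$ with $\int_\zeta^b|c_n^\delta(t)-c^\delta|\,{\mathrm d}t\to 0$.

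Next I would show that $\{c^\delta\}_{\delta>0}$ is Cauchy as $\delta\to 0$. For $0<\delta_2<\delta_1$ the functions $f^{\delta_1}$ and $f^{\delta_2}$ differ only on $(\zeta-\delta_1,\zeta-\delta_2/2)$, so $x_n^{\delta_1}$ and $x_n^{\delta_2}$ agree at grid points $\le\zeta-\delta_1-1/n+h_n$. Combining the Lipschitz bound on $f$, the estimate (\ref{solutionInequalityItem3}), Gronwall, and $\sum_k|L_n(t_{k+1})-L_n(t_k)|\le V_u^{v+1/n}L$, the discrepancy on the periodic tail is at most $C(1+|x_n^0(\tau_t)|)V_{\zeta-\delta_1-1/n}^{\zeta-\delta_2/2+h_n+1/n}L$. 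For $n$ large enough that $h_n+1/n<\delta_2/2$, the upper endpoint lies strictly below $\zeta$, so this variation is dominated by $V_{\zeta-\delta_1}^{\zeta^-}L$; integrating over $[\zeta,b]$ and letting first $n\to\infty$ and then $\delta_1,\delta_2\to 0$, the right-hand side vanishes since $L$ has bounded variation. Set $x^-(\zeta):=\lim_{\delta\to 0}c^\delta$.

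Finally, the triangle inequality
\[
|x_n(t_j(t,n))-x^-(\zeta)|\le|x_n(t_j)-x_n^\delta(t_j)|+|x_n^\delta(t_j)-c_n^\delta(t)|+|c_n^\delta(t)-c^\delta|+|c^\delta-x^-(\zeta)|
\]
splits the task into four pieces. For $n$ so large that $1/n+h_n<\delta/2$, the point $t_j(t,n)\in[\zeta-1/n-h_n,\zeta-1/n)$ lies to the right of $\zeta-\delta/2$, hence $x_n^\delta(t_j)=c_n^\delta(t)$ and the second term vanishes; the third integrates to $0$ by the previous step and the fourth by the definition of $x^-(\zeta)$. The first term is bounded by $C(1+|x_n^0(\tau_t)|)V_{\zeta-\delta-1/n-h_n}^{t_j+1/n}L$, and since $t_j+1/n<\zeta$ this variation is at most $V_{\zeta-\delta-1/n-h_n}^{\zeta^-}L$, which tends to $0$ as $n\to\infty$ and then $\delta\to 0$. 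The main obstacle, and the feature that makes the $x^-$ argument subtler than the $x^+$ one, is keeping the upper endpoint of this variation strictly below $\zeta$: any estimate whose endpoint reached $\zeta$ itself would retain the jump $\Delta L(\zeta)$ and destroy convergence. The definition of $t_j$ as the last grid point strictly to the left of $\zeta-1/n$ is exactly calibrated so that the convolution $L_n$ on those grid points misses the jump at $\zeta$.
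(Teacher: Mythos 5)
Your proposal is correct, but it takes a genuinely different route from the paper's. The paper never introduces a left-sided cutoff: it obtains $x^-(\zeta)$ as $\lim_{m\to\infty}x^+(\zeta-1/m)$, applying Proposition~\ref{prop:x^+} at the shifted points $\zeta-1/m$ (so that $t_{j+1}(t,m)$ plays the role of $t_q$ for the point $\zeta-1/m$), proving that $m\mapsto x^+(\zeta-1/m)$ is Cauchy by comparing the right-sided cutoffs $f^{\delta,m}$ and $f^{\delta,k}$ centred at $\zeta-1/m$ and $\zeta-1/k$, and finally controlling $|x_n(t_j(t,n))-x_n(t_{j+1}(t,m))|$ via (\ref{solutionInequalityItem3}), which produces a variation $V^{\zeta-}_{\zeta-1/m}L$ vanishing as $m\to\infty$. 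You instead rebuild the construction symmetrically with a cutoff supported to the left of $\zeta$ and define $x^-(\zeta)$ as the limit of the tail constants $c^\delta$. Both arguments rest on the same three ingredients: the hypothesis that $x_n(f)$ converges for every admissible $f$, the averaging argument of Proposition~\ref{statement:x0} identifying the limit of an $h_n$-periodic tail with a constant, and variation bounds whose upper endpoint stays strictly below $\zeta$ so that the jump $\Delta L(\zeta)$ is never charged (your closing remark correctly isolates this as the crux, and it is exactly why only bounded variation, not right-continuity, is needed on this side). The paper's version is shorter because it recycles Proposition~\ref{prop:x^+} wholesale; yours is more self-contained and makes the left/right symmetry explicit, at the modest extra cost of verifying that the left cutoff freezes the scheme past $\zeta-\delta/2$ and that $t_j(t,n)$ eventually lands in the frozen zone, which you do correctly.
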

\begin{proof}

% Let us show that for any Lipschitz function $f$ satisfying inequality (\ref{eq: fg}) there exists number $x^- (\zeta) = x^- (\zeta, f)$ such that $\int_{\zeta}^b |x_n(t_j
% (t,n)) - x^-(\zeta)| d t \to 0$.

Applying equality (\ref{x^+}) at point $\zeta - \frac{1}{m}$ we have
\begin{equation}\label{x_m^+}
\lim_{n\to \infty}\int_{\zeta}^b |x_n(t_{j+1} (t,m)) -
x^+(\zeta - \frac{1}{m})| d t = 0.
\end{equation}

Now we show that sequence $x^+(\zeta - \frac{1}{m})$ converges as $m \to \infty$. It is sufficient to prove that $\left | x^+(\zeta - \frac{1}{m}) - x^+(\zeta - \frac{1}{k}) \right | \to 0$ as $m,k \to \infty$.

Introduce $k,m \in \mathbb{N}$ such that $k < m$ and $\delta < \frac{1}{m k}$. Denote $f^{\delta,m}(t,x)= f(t,x) g(\delta, \zeta - \frac{1}{m}, t)$ and $x_n^{\delta,m} = x_n(f^{\delta,m})$ -- solution of equation (\ref{eq:representetieves system}) with function $f^{\delta,m}$. Due to special form of $\delta$  we have that  $x_n^{\delta,m} (t) = x_n^{\delta,k} (t)$ if $t < \zeta - 1/k - 1/n$. Moreover, as it was shown above, there exists number $c^\delta(\zeta-\frac{1}{m})$ such that $$\lim_{n \to \infty}\int_\zeta^b \left | x_n^{\delta,m}(t) - c^\delta(\zeta-\frac{1}{m})\right | {\mathrm d} t = 0.$$
It is easy to see that
$$\left | c^\delta(\zeta-\frac{1}{k})- c^\delta(\zeta-\frac{1}{m})\right | = \frac{1}{b-\zeta} \lim_{n \to \infty}\int_\zeta^b \left | x_n^{\delta,m}(t) -x_n^{\delta,k}(t) \right | {\mathrm d} t.$$
Equality (\ref{eq:representetieves explicit}) implies
$$\left | x_n^{\delta,m}(t) -x_n^{\delta,k}(t) \right | = \left | \sum^{m_t-1}_{i=0}[f^{\delta,m}_n(t_i,x_n^{\delta,m}(t_i))-f^{\delta,k}_n(t_i,x_n^{\delta,k}(t_i))][L_n(t_{i+1})-L_n(t_i)] \right |.$$
Since $\delta < \frac{1}{mk}$, $f^{\delta, m}_n = f^{\delta, k}_n$ if $t \not \in (\zeta - \frac{1}{k}-\frac{1}{n}; \zeta - \frac{1}{m}+\frac{1}{mk})$. Denote by $t_r$ the first point that belongs to this interval and $t_{r^*}$ -- the last one. Then using inequality (\ref{solutionInequalityItem3}) and the same technique as above we have
\begin{equation*}
\begin{split}
&\left | x_n^{\delta,m}(t) -x_n^{\delta,k}(t) \right | =\\
&= \left | \sum^{r^*}_{i=r}[f^{\delta,m}_n(t_i,x_n^{\delta,m}(t_i))-f^{\delta,k}_n(t_i,x_n^{\delta,k}(t_i))][L_n(t_{i+1})-L_n(t_i)] \right | \leq\\
&\leq C(1+\left | x^0_n(\tau_t)\right |)V_{\zeta - \frac{1}{k}-\frac{1}{n}}^{\zeta - \frac{k-1}{mk}+h_n + \frac{1}{n}}L.
\end{split}
\end{equation*}
After taking integral over  $[\zeta
, b]$ and letting $n \to
\infty$ we obtain that
\begin{equation*}
\begin{split}
\left | c^\delta(\zeta-\frac{1}{k})- c^\delta(\zeta-\frac{1}{m})\right | &= \frac{1}{b-\zeta} \lim_{n \to \infty}\int_\zeta^b \left | x_n^{\delta,m}(t) -x_n^{\delta,k}(t) \right | {\mathrm d} t \leq\\
&\leq C(1+\left | x^0 \right |)V_{\zeta - \frac{1}{k}}^{\zeta - \frac{k-1}{mk}}L.
\end{split}
\end{equation*}
Recall that $c^\delta(u) \to x^+(u)$ as $\delta \to 0$. Therefore the above inequality implies
\begin{equation*}
\begin{split}
&\left | x^+(\zeta - \frac{1}{m}) - x^+(\zeta - \frac{1}{k}) \right | \leq C(1+\left | x^0 \right |)V_{\zeta - \frac{1}{k}}^{\zeta - \frac{k-1}{mk}}L.
\end{split}
\end{equation*}
Thus $\left | x^+(\zeta - \frac{1}{m}) - x^+(\zeta - \frac{1}{k}) \right | \to 0$ as $m,k \to \infty$. Therefore sequence $x^+(\zeta - \frac{1}{k})$ converges.
Denote by $x^-(\zeta)$ its limit.

From equality (\ref{x_m^+}) we have
$$\lim_{m\to \infty}\lim_{n\to \infty}\int_{\zeta}^b |x_n(t_{j+1} (t,m)) -
x^-(\zeta)| d t = 0.$$

Now we calculate $\lim_{n\to \infty}\int_{\zeta}^b |x_n(t_{j} (t,n)) -x^-(\zeta)| d t$. It is evident that for each $m$ and large enough $n$ for all $t \in \mathbf{T}$ $t_{j+1} (t,m)< t_{j} (t,n)$.

Equation (\ref{eq:representetieves explicit}) yields
\begin{equation*}
\begin{split}
&\lim_{n\to \infty}\int_{\zeta}^b |x_n(t_{j} (t,n)) -x^-(\zeta)| d t \leq\\
&\leq\lim_{m\to \infty}\lim_{n\to \infty}\int_{\zeta}^b |x_n(t_{j} (t,n)) - x_n(t_{j+1} (t,m))| d t +\\
&\quad+ \lim_{m\to \infty}\lim_{n\to \infty}\int_{\zeta}^b |x_n(t_{j+1} (t,m)) -x^-(\zeta)| d t \leq\\
&\leq\lim_{m\to \infty}\lim_{n\to \infty}\int_{\zeta}^b C (1 + \left | x^0_n(\tau_t) \right | )V^{t_{j}(t,n) + \frac{1}{n}}_{t_{j+1} (t,m)}L d t \leq \lim_{m\to \infty} C (1 + \left | x^0 \right | )V^{\zeta -}_{\zeta - \frac{1}{m}}L =0,
\end{split}
\end{equation*}
since $L$ -- a right-continuous function.%, i.e. $V^{\zeta-}_{\zeta - \frac{1}{m}}L \to 0$ as $m \to \infty$.

Thus
\begin{equation*}
\lim_{n\to \infty}\int_{\zeta}^b |x_n(t_{j} (t,n)) -x^-(\zeta)| d t = 0.
\end{equation*}
\end{proof}

\begin{prop}\label{prop:phi_n->x+}
Let $\zeta \in \mathbf{T}$ be a point of discontinuity of $L$, then
\begin{equation} \label{eq: phi_n->x+}
\varphi^{n}_{p+2}(\Delta L(\zeta) f(\zeta,\cdot),x^- (\zeta),t)
\to x^+ (\zeta)  \mbox{ in $\mathbf{L_1(T)}$ as } n \to \infty,
\end{equation}
where $x^+ (\zeta)$ and $x^- (\zeta)$ are defined in propositions \ref{prop:x^+} and \ref{prop:x^-}, $p=[1/(n h_n)]$ and function $\varphi^{n}_k(z,x,t)$ is defined by (\ref{eq: varphi}) with $\xi^n_k (t) = F_n (\zeta - t_{j+k}(t))$.
\end{prop}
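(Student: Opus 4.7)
The plan is to compare $\varphi^n_{p+2}(\Delta L(\zeta) f(\zeta,\cdot), x^-(\zeta), t)$ with $x_n(t_q(t))$, which is known to converge to $x^+(\zeta)$ in $L_1([\zeta,b])$ by Proposition \ref{prop:x^+}. Set $z(x) := \Delta L(\zeta) f(\zeta, x)$. Since $F_n(\zeta - s) = 1$ whenever $s \ge \zeta$, we have $\xi^n_k(t) = 1$ and $\xi^n_{k+1}-\xi^n_k = 0$ for every $k \ge q-j$; hence $\varphi^n_{p+2} = \varphi^n_{q-j}$ for large $n$. The argument then proceeds via the triangle inequality
$$|\varphi^n_{p+2}(z, x^-(\zeta), t) - x^+(\zeta)| \le \mathrm{I}_n(t)+\mathrm{II}_n(t)+\mathrm{III}_n(t),$$
with $\mathrm{I}_n = |\varphi^n_{p+2}(z, x^-(\zeta), t) - \varphi^n_{p+2}(z, x_n(t_j(t,n)), t)|$, $\mathrm{II}_n = |\varphi^n_{p+2}(z, x_n(t_j(t,n)), t) - x_n(t_q(t))|$ and $\mathrm{III}_n = |x_n(t_q(t)) - x^+(\zeta)|$.

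The term $\mathrm{III}_n$ tends to $0$ in $L_1([\zeta,b])$ by Proposition \ref{prop:x^+}. For $\mathrm{I}_n$ I would invoke the discrete analogue of the first inequality of Proposition \ref{ineq: phi}: by the same Gronwall-type induction applied to the recursion (\ref{eq: varphi}), the map $x \mapsto \varphi^n_{p+2}(z, x, t)$ is Lipschitz with constant independent of $n$; combined with Proposition \ref{prop:x^-} this gives $\int_\zeta^b \mathrm{I}_n(t)\,{\mathrm d}t \to 0$.

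The core of the proof is $\int_\zeta^b \mathrm{II}_n(t)\,{\mathrm d}t \to 0$. Writing the two recursions
$$x_n(t_{j+k+1})-x_n(t_{j+k}) = f_n(t_{j+k},x_n(t_{j+k}))\bigl[L_n(t_{j+k+1})-L_n(t_{j+k})\bigr],$$
$$\varphi^n_{k+1}-\varphi^n_k = \Delta L(\zeta)f(\zeta,\varphi^n_k)\bigl[\xi^n_{k+1}-\xi^n_k\bigr],$$
the per-step discrepancy splits into (a) a Lipschitz error $|f_n(t_{j+k},x)-f(\zeta,x)|$ which vanishes because $t_{j+k}\in[\zeta-1/n-h_n,\zeta+h_n]$ and $f$ is continuous with the mollifier rescaling $1/n\to 0$; (b) the propagated error $C|x_n(t_{j+k})-\varphi^n_k|\cdot|L_n(t_{j+k+1})-L_n(t_{j+k})|$; and (c) the key discrepancy $\bigl|L_n(t_{j+k+1})-L_n(t_{j+k}) - \Delta L(\zeta)(\xi^n_{k+1}-\xi^n_k)\bigr|$. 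Decomposing $L = L^c + \Delta L(\zeta)\mathbf{1}_{[\zeta,\infty)} + L^{\mathrm{rem}}$, where $L^{\mathrm{rem}}$ collects the other jumps, one checks directly from (\ref{eq: Ln}) and (\ref{eq: Fn}) that the convolution of $\Delta L(\zeta)\mathbf{1}_{[\zeta,\infty)}$ with $\rho_n$ equals $\Delta L(\zeta) F_n(\zeta - \cdot)$, so the summed contribution from (c) over $k=0,\dots,q-j-1$ is controlled by the total variation of $L^c + L^{\mathrm{rem}}$ on the shrinking window $[\zeta - 1/n - h_n, \zeta + h_n + 1/n]$, which tends to $0$ by right-continuity of $L$ and the fact that $L^c + L^{\mathrm{rem}}$ is continuous at $\zeta$. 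Using the uniform bound on $x_n$ from (\ref{solutionInequalityItem1a}) and applying discrete Gronwall to the resulting error recursion, as in the proof of Proposition \ref{ineq: x}, yields $\mathrm{II}_n(t)\to 0$ uniformly for $t\in[\zeta,b]$.

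The main obstacle is the estimate on term (c): although each factor $|L_n(t_{j+k+1})-L_n(t_{j+k})|$ is individually small, there are $O(1/(nh_n))\to\infty$ steps in the recursion, so a naïve estimate accumulates an unbounded error. The cancellation obtained by writing the jump contribution exactly as $\Delta L(\zeta)(\xi^n_{k+1}-\xi^n_k)$ — leaving only the variation of the continuous and remainder parts in a vanishing neighborhood of $\zeta$ — is what makes the Gronwall iteration converge.
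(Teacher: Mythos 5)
Your proposal is correct and follows essentially the same route as the paper: the paper likewise isolates the jump part $L^d=\Delta L(\zeta)\mathbf{1}_{[\zeta,\infty)}$ and computes that its mollified increments are exactly $\Delta L(\zeta)[\xi^n_{i+1}(t)-\xi^n_i(t)]$, controls the leftover by the variation of $L-L^d$ on a shrinking window around $\zeta$ (via the estimate it cites from lemma 5.6 of \cite{YN}), and then transfers the initial condition from $x_n(t_j)$ to $x^-(\zeta)$ using the Lipschitz dependence of $\varphi^n_{p+2}$ on $x$ together with Propositions \ref{prop:x^+} and \ref{prop:x^-}. Your three-term triangle-inequality bookkeeping is only a cosmetic reorganization of the paper's argument (and your claim that $\mathrm{II}_n\to 0$ uniformly should be weakened to convergence in $\mathbf{L_1}$, since the bound carries the factor $1+|x^0_n(\tau_t)|$).
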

\begin{proof}
Recall that $t_q=t_q(t)$ and $t_j=t_j(t,n)$ are defined by inequalities (\ref{t_q}) and (\ref{t_j}) respectively.
As $p=[1/(n h_n)]$, then
$$
x_n(t_{j+p+2}) - x_n (t_j) = \sum_{i=0}^{p+1}
f_n(t_{j+i},x_n(t_{j+i})) (L_n (t_{j+i+1}) - L_n (t_{j+i})).
$$
It is easy to see that $t_{j+p+2}$ is equal to $t_q$ or $t_{q+1}$. Therefore equalities (\ref{x^+}) and (\ref{x^-}) imply that
\begin{equation} \label{eq: f_n(x_n)L_n}
I_n = \sum_{i=0}^{p+1} f_n(t_{j+i},x_n(t_{j+i})) (L_n (t_{j+i+1})
- L_n (t_{j+i})) \xrightarrow[{n \to \infty }]{} x^+ (\zeta) - x^- (\zeta) \mbox{ in $\mathbf{L_1(T)}$}.
\end{equation}

Now let us represent function $L$ in the following form: $L=L^*+L^d$ where $$L^d(t)=\left\{ \begin{array}{ll}
 0, \mbox{ if } t < \zeta, \\
\Delta L (\zeta), \mbox{ if }  t \ge \zeta.
\end{array} \right.$$
Then we have
\begin{equation*}
\begin{split}
&L^d_n(t_{j + i+1}) - L^d_n(t_{j +i}) = \int_0^{1/n} (L^d(t_{j + i+1}+s) - L^d(t_{j + i}+s)) \rho _n (s) {\mathrm d}s =\\
&=\int^{\zeta - t_{j+i+1}}_{\zeta - t_{j+i+1} - 1/n} (L^d(\zeta-u) - L^d(\zeta - h_n -u)) \rho _n (\zeta - t_{j+i+1}-u) {\mathrm d}u=J.\\
\end{split}
\end{equation*}
From definition of $L^d$ one can see that $L^d(\zeta-u) - L^d(\zeta - h_n -u) = 0$ if $u \not \in [-h_n;0)$. Then since $\mathrm{supp} \ \rho_n \subset [0; 1/n]$ we have
\begin{equation*}
\begin{split}
J&=\int^{0}_{-h_n} (L^d(\zeta-u) - L^d(\zeta - h_n -u)) \rho _n (\zeta - t_{j+i+1}-u) {\mathrm d}u=\\
&=\Delta L(\zeta) \int^{0}_{-h_n} \rho _n (\zeta - t_{j+i+1}-u) {\mathrm d}u=\Delta L(\zeta) \int^{\zeta - t_{j+i}}_{\zeta - t_{j+i+1}} \rho _n (s) {\mathrm d}s=\\
&=\Delta L(\zeta)[F_n(\zeta - t_{j + i + 1})-  F_n(\zeta - t_{j + i})]=\Delta L(\zeta)[\xi^{n}_{i+1}(t)-\xi^{n}_i(t)].
\end{split}
\end{equation*}

% Recall that $\xi^n_k (t) = F_n (\zeta - t_{j+k}(t))$ and function $\varphi^{n}_k(z,x,t)$ is defined in (\ref{eq: varphi})
%$z(x)=\Delta L(\zeta_i) f_i(t_{j_i},x)$

Applying methods described in lemma 5.6 from \cite{YN} and using an explicit form of $f_n$, $L_n$ and $\varphi^{n}_{p+2}$ one can show that
\begin{equation*}
\begin{split}
&\left | \sum_{i=0}^{p+1} f_n(t_{j+i},x_n(t_{j+i})) (L_n (t_{j+i+1})
- L_n (t_{j+i})) - \right.\\
&\qquad\qquad\qquad\qquad\qquad\qquad\left. \phantom{\sum_i^i} - (\varphi^{n}_{p+2}(\Delta L(\zeta)
f(\zeta,\cdot),x_n (t_j),t) - x_n (t_j) ) \right | \le\\
&\le C(1+|x_n^0 (\tau_t)|)( 1/n +  h_n + V_{\zeta-1/n}^{\zeta -} L
+ V_{\zeta}^{\zeta +1/n+h_n} L).
\end{split}
\end{equation*}

Therefore formula (\ref{eq: f_n(x_n)L_n}) yields
$$
\varphi^{n}_{p+2}(\Delta L(\zeta) f(\zeta,\cdot),x_n (t_j),t) -
x_n (t_j) \to x^+ (\zeta) - x^- (\zeta) \mbox{ in $\mathbf{L_1(T)}$}.
$$
Using proposition \ref{ineq: phi} we obtain that
\begin{equation*}
\begin{split}
&\left|\varphi^{n}_{p+2}(\Delta L(\zeta) f(\zeta,\cdot),x_n (t_j),t) - x_n (t_j) - \varphi^{n}_{p+2}(\Delta L(\zeta) f(\zeta,\cdot),x^- (\zeta),t) + x^- (\zeta) \right| \leq\\
&\leq C \left|x_n (t_j) - x^- (\zeta)\right|.
\end{split}
\end{equation*}

Since from (\ref{x^-}) we have that $x_n (t_j) \to x^- (\zeta)$  in $\mathbf{L_1(T)}$, then
\begin{equation*}
\varphi^{n}_{p+2}(\Delta L(\zeta) f(\zeta,\cdot),x^- (\zeta),t)
\to x^+ (\zeta) \mbox{ in $\mathbf{L_1(T)}$}.
\end{equation*}
\end{proof}
% Thus we managed to show that for any Lipschitz function $f$ of a bounded growth (\ref{eq: fg}) there is a
% number $x^- (\zeta) = x^- (\zeta, f)$ such as equality (\ref{eq: phi_n->x+}) holds.
%
% Almost all conditions of lemma \ref{lemma: obratnaya} hold.

\begin{prop}\label{prop:phi_x0}
Sequence of functions $\varphi^{n}_{p+2}(\Delta L(\zeta) f(\zeta,\cdot),x^0,t)$ converges in $\mathbf{L_1(T)}$.
\end{prop}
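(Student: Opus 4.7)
The plan is to reduce the claim to Proposition \ref{prop:phi_n->x+} by modifying $f$ on a small left neighbourhood of $\zeta$ so that the modified left trace can be made arbitrarily close to the prescribed value $x^0$, while $f(\zeta,\cdot)$, and hence the coefficient $\Delta L(\zeta) f(\zeta,\cdot)$ appearing in $\varphi^{n}_{p+2}$, is left untouched. Concretely, for $\delta>0$ small, let $g_\delta\colon \mathbf{T}\to [0,1]$ be a Lipschitz cut-off equal to $0$ for $t \le \zeta-\delta$ and to $1$ for $t \ge \zeta$, and set $\tilde f_\delta(t,x) = f(t,x) g_\delta(t)$. Then $\tilde f_\delta$ is Lipschitz with bounded growth (with the same constants as $f$), and $\tilde f_\delta(\zeta,\cdot) = f(\zeta,\cdot)$.

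First, by Proposition \ref{statement:x0}, the solution $\tilde x_n^{(\delta)}$ of (\ref{eq:representetieves system}) with coefficient $\tilde f_\delta$, which converges in $\mathbf{L_1(T)}$ by the theorem's hypothesis, has the same initial-data limit $x^0$ as $x_n$. Since $\tilde f_\delta\equiv 0$ on $[a,\zeta-\delta]$, inequality (\ref{solutionInequalityItem3}) applied on $[\tau_t, t_j(t,n)]$ gives
\[
|\tilde x_n^{(\delta)}(t_j(t,n)) - x_n^0(\tau_t)| \le C(1+|x_n^0(\tau_t)|) V^{\zeta-}_{\zeta-\delta-1/n} L.
\]
Integrating over $[\zeta,b]$, letting $n\to\infty$ (via Proposition \ref{prop:x^-}) and then $\delta\to 0$, using right-continuity of $L$ so that $V^{\zeta-}_{\zeta-\delta} L\to 0$, I would conclude $\tilde x^{(\delta),-}(\zeta) \to x^0$ as $\delta\to 0$.

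Next, I would apply Proposition \ref{prop:phi_n->x+} to $\tilde f_\delta$; since $\tilde f_\delta(\zeta,\cdot) = f(\zeta,\cdot)$, this yields for each fixed $\delta$ the $\mathbf{L_1(T)}$-convergence
\[
\varphi^{n}_{p+2}(\Delta L(\zeta) f(\zeta,\cdot), \tilde x^{(\delta),-}(\zeta), t) \xrightarrow[n\to\infty]{} \tilde x^{(\delta),+}(\zeta).
\]
A discrete Gronwall argument, exactly parallel to the one behind the first inequality of Proposition \ref{ineq: phi} and using $\sum_{k=0}^{p+1}(\xi^n_{k+1}(t)-\xi^n_k(t))\le 1$, produces the uniform (in $n$ and $t$) Lipschitz estimate
\[
|\varphi^{n}_{p+2}(\Delta L(\zeta) f(\zeta,\cdot), x^0, t) - \varphi^{n}_{p+2}(\Delta L(\zeta) f(\zeta,\cdot), \tilde x^{(\delta),-}(\zeta), t)| \le C |x^0 - \tilde x^{(\delta),-}(\zeta)|.
\]
Integrating over $\mathbf{T}$ and taking $\limsup_{n\to\infty}$ then gives
\[
\limsup_{n\to\infty} \int_{\mathbf{T}} |\varphi^{n}_{p+2}(\Delta L(\zeta) f(\zeta,\cdot), x^0, t) - \tilde x^{(\delta),+}(\zeta)|\, {\mathrm d}t \le C|\mathbf{T}|\,|x^0 - \tilde x^{(\delta),-}(\zeta)|,
\]
which is made arbitrarily small by choosing $\delta$ small. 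A triangle-inequality argument in $n,m$ then shows that $\varphi^{n}_{p+2}(\Delta L(\zeta) f(\zeta,\cdot), x^0, \cdot)$ is Cauchy in $\mathbf{L_1(T)}$, and so converges.

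The main obstacle is the first step: constructing $\tilde f_\delta$ that both preserves the jump coefficient at $\zeta$ and drives $\tilde x^{(\delta),-}(\zeta) \to x^0$. Once this reduction is in place, Proposition \ref{prop:phi_n->x+} together with the uniform discrete Gronwall estimate bridges the gap between starting $\varphi^{n}_{p+2}$ at $\tilde x^{(\delta),-}(\zeta)$ versus at the prescribed value $x^0$.
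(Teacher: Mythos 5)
Your proposal is correct and follows essentially the same route as the paper: the same cut-off $g_\delta$ vanishing left of $\zeta-\delta$ and equal to $1$ at $\zeta$, the same variation estimate forcing $x^-(\zeta,g_\delta)\to x^0$, the same appeal to Proposition \ref{prop:phi_n->x+}, and the same Lipschitz stability of $\varphi^n_{p+2}$ in its initial value. The only cosmetic difference is that the paper first identifies the limit explicitly as $\lim_{\delta\to 0}x^+(\zeta,g_\delta)$ and proves convergence to it, whereas you conclude via a Cauchy argument; both are valid.
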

\begin{proof}
 Consider the following set of functions:
\begin{equation*}
w(\delta,\zeta,t) = \left\{ \begin{array}{ll}
 0, \mbox{ if } t \le \zeta - \delta, \\
(t + \delta - \zeta )/\delta, \mbox{ if }  \zeta - \delta < t \le \zeta,\\
 1, \mbox{ if } t > \zeta.
\end{array} \right.
\end{equation*}
Denote by $x_n^\delta$ solution of equation (\ref{eq:representetieves system}) with function
 $g_\delta(t,x)=f(t,x) w(\delta,\zeta,t)$.
As it was shown above, $x_n^\delta (t_j) \to x^-(\zeta, g_\delta)$.  Moreover, using equality (\ref{eq:representetieves explicit}) one can show that
$$
|x_n^\delta (t_j) - x_n^0(\tau_t)| \le C (1+|x_n^0(\tau_t)|)
V_{\zeta - \delta}^{t_j + 1/n} L.
$$
Taking integral over $\mathbf{T}$ and letting $n\to \infty$ we get
$$
|x^- (\zeta, g_\delta) - x^0| \le C (1+|x^0|) V_{\zeta -
\delta}^{\zeta - } L.
$$
Thus $x^- (\zeta, g_\delta)$ converges to $x^0$ as $\delta \to 0$.

Let us show that $x^+(\zeta, g_\delta)$ also converges. Since $\Delta L(\zeta)f(\zeta,\cdot)=\Delta L(\zeta)g_\delta(\zeta,\cdot)$ formula (\ref{eq: phi_n->x+}) implies that $\varphi^{n}_{p+2}(\Delta L(\zeta) f(\zeta,\cdot),x^- (\zeta,g_\delta),t) \to x^+(\zeta, g_\delta)$. Using definition of $\varphi_n$ we obtain the following inequality:
\begin{equation*}
\begin{split}
&\left |\varphi^{n}_{p+2}(\Delta L(\zeta) f(\zeta,\cdot),x^- (\zeta,g_{\delta_1}),t) - \varphi^{n}_{p+2}(\Delta L(\zeta) f(\zeta,\cdot),x^- (\zeta, g_{\delta_2}),t)\right | \le \\
&\le C \left |x^-(\zeta, g_{\delta_1}) - x^- (\zeta,g_{\delta_2})\right |.
\end{split}
\end{equation*}
Therefore
\begin{equation}\label{x^+->x0}
|x^+(\zeta, g_{\delta_1}) - x^+(\zeta, g_{\delta_2})| \le C |x^- (\zeta,g_{\delta_1}) - x^-
(\zeta,g_{\delta_2})|.
\end{equation}
Since sequence $x^-(\zeta, g_{\delta})$ converges, then from (\ref{x^+->x0}) we obtain that sequence $x^+(\zeta, g_{\delta})$ also converges. Denote its limit by $x^+(\zeta, f^0 )$.

Definition of $\varphi_n$ implies
%Consider the following expression $\varphi^{n}_{p+2}(\Delta L(\zeta) f(\zeta,\cdot),x^0 ,t) - x^+(\zeta, f^0 )$.
\begin{equation*}
\begin{split}
&|\varphi^{n}_{p+2}(\Delta L(\zeta) f(\zeta,\cdot),x^0 ,t) -
x^+(\zeta, f^0 )| \le\\
&\le |\varphi^{n}_{p+2}(\Delta L(\zeta)
f(\zeta,\cdot),x^0 ,t)  - \varphi^{n}_{p+2}(\Delta L(\zeta)
f(\zeta,\cdot),x^- (\zeta,g_{\delta}),t)| +\\
&\quad+| \varphi^{n}_{p+2}(\Delta L(\zeta) f(\zeta,\cdot),x^- (\zeta,g_{\delta}),t) - x^+(\zeta, g_{\delta})| +
|x^+(\zeta, g_{\delta}) - x^+(\zeta, f^0 )| \le\\
&\le C |x^- (\zeta,g_{\delta}) - x^0| +|
\varphi^{n}_{p+2}(\Delta L(\zeta) f(\zeta,\cdot),x^- (\zeta,g_{\delta}),t) - x^+(\zeta, g_{\delta})| +\\
&\quad +|x^+(\zeta, g_{\delta}) - x^+(\zeta, f^0 )|.
\end{split}
\end{equation*}
Taking integral over interval $[\zeta;b]$ and letting $n \to \infty$ we have
\begin{equation*}
\begin{split}
\lim_{n \to \infty} \int_{\zeta}^b |\varphi^{n}_{p+2}(\Delta
L(\zeta) f(\zeta,\cdot),x^0 ,t) - x^+(\zeta, f^0 )|dt &\le C |x^-
(\zeta,g_{\delta}) - x^0| +\\
&+ C |x^+(\zeta, g_{\delta}) - x^+(\zeta, f^0 )|.
\end{split}
\end{equation*}
Tending $\delta \to 0$ we obtain that
$\varphi^{n}_{p+2}(\Delta L(\zeta) f(\zeta,\cdot),x^0 ,t)$
converges in $\mathbf{L_1(T)}$.
\end{proof}

\begin{pr} \end{pr} %Proof of the theorem

Proposition \ref{statement:L^c} states that theorem \ref{theorem:necessary} holds for continuos function $L$.

For discontinus function $L$ let us introduce some notations. $$\varphi^{n}_{p+2}(\Delta L(\zeta) f(\zeta,\cdot),x^0 ,t) = \phi^n(1,t),$$
where $\varphi^{n}_{p+2}$ and $\xi^n_k (t)$ are defined in proposition \ref{prop:phi_n->x+}. Then function $\sigma^n(s,t)$ is generated by $\xi^n_k (t)$ according to (\ref{eq: sigma^n}) and $\phi^n$ is a solution of the corresponding integral equation:
$$
\phi^n(u,t)=x^0+\int_{[0;u)}\Delta L(\zeta) f(\zeta,\phi^n(s,t))\sigma^n(ds,t).
$$
Proposition \ref{prop:phi_x0} implies that there exists such a number $x^0 \in \mathbb{R}$ that for any Lipshitz function $f$ of a bounded growth and for any discontinuity point $\zeta$ of $L$ sequence $\phi^n(1,t)$ converges in $\mathbf{L_1(\mathbf{T})}$.
Since for each $f$ sequence $\phi^n(1,t)$ converges in $\mathbf{L_1(\mathbf{T})}$, there exists such a subsequence $n_k$, depending on $f$, that $\phi^{n_k}(1,t)$ converges almost everywhere.

Fix $t \in \mathbf{T}$ in such a way that $\phi^{n_k}(1,t)$ converges for a countable number of functions $f=z_{q,\varepsilon}$, $q, \varepsilon \in \mathbb{Q}$ defined in lemma \ref{lemma: obratnaya}. Denote $\phi^{n_k}(1)=\phi^{n_k}(1,t)$ and $\sigma^{n_k}(u)=\sigma^{n_k}(u,t)$.
Thus, we figured out that there exists a sequence $n_k$, for which sequence $\phi^{n_k}(1)$ converges for any functions $z_{q,\varepsilon}$. From remark \ref{remark: lemma obratnaya} we deduce that lemma \ref{lemma: obratnaya} holds therefore.
% Therefore using diagonalization process one can select subsequence $n_k$, such that $\phi^{n_k}_i(1)$ converges for every function $f=z_{q,\varepsilon}$, $q, \varepsilon \in \mathbb{Q}$ where $z_{q,\varepsilon}$ were defined during the proof of lemma \ref{lemma: obratnaya}.
Lemma \ref{lemma: obratnaya} in its turn implies that sequence of functions $\sigma^{n_k}$ converges to $\sigma \in G$ for any continuity point $u$ of $\sigma$. This means that lemma \ref{conditionLemma} conditions are also held, so we  obtain that
\begin{equation}\label{eq: F n_k}
F_{n_k}(F^{-1}_{n_k}(u)-\delta h_{n_k}) \to \sigma(u),
\end{equation}
as $n_k\rightarrow \infty$, $h_{n_k}\rightarrow 0$ for all $\delta\in
(0,1)$ and for each continuity point $u$ of $\sigma$.

It is easy to see that for each subsequence $n'$ one can select subsequence $n_k'$ such that convergence (\ref{eq: F n_k}) holds. Therefore (\ref{eq: F n_k}) holds for the whole sequence $n$, i.e.
\begin{equation}\label{eq: F n}
F_{n}(F^{-1}_{n}(u)-\delta h_{n}) \to \sigma(u)
\end{equation}
as
$n\rightarrow \infty$, $h_{n}\rightarrow 0$, for all $\delta\in
(0,1)$ and for all continuity points $u \in [0,1]$ of
$\sigma$.

Formula (\ref{eq: F n}) means that conditions of theorem \ref{theorem:suficient} hold. Applying theorem \ref{theorem:suficient} we obtain that limit of solutions of equations (\ref{eq:representetieves system}) is a solution of integral equation  (\ref{eq:limit int}) with measure $\mu$ generated by $\sigma$, where $\sigma \in G$. The proof is complete.
%\end{proof}

\section{Remarks}
\begin{cor}
\label{cor: simple} Let $\delta$-sequence $\rho_n$ be of the
simplest type, what means that $\rho_n(t)=n\rho(nt)$, where $\rho \in C^\infty (\mathbb{R})$, $\rho \ge 0$, $\mathrm{supp} \ \rho \subseteq [0;1]$ and $\int_0^{1} \rho (s){\mathrm d}s =1$.

Then the sequence $F_n(F_n^{-1}(u)-\delta h_n )$
converges at any continuity point of the limit
function $\sigma$ for any $\delta \in (0;1)$ and the limit does not depend
on $\delta$ if and only if either $1/n = o(h_n)$ or $h_n =
o(1/n)$.

Moreover, suppose that $f$ is Lipschitz
function of a bounded growth and $\int_{t \in \mathbf{T}} |x_n^0(\tau_t)-x^0| d t \to 0$ as
$n \to \infty$ and $h_n \to 0$. Then $x_n$ -- solution of the equation (\ref{eq:representetieves system}) converges to $x(t)$ in $\mathbf{L_1(T)}$ if and only if
% $$\int_{ \mathbf{T}} |x_n(t)-x(t)| dt \to 0.$$
\begin{enumerate}
\item
$1/n = o(h_n)$, what implies that measure $\mu$, generated by $\sigma$, gives the mass one to the point $0$. Consecuently equation (\ref{eq: phi}) has the following solution
$$\varphi(z,x,u) = \left \{\begin{array}{l}
x, \ u =0, \\ x+z(x), \ u \in (0;1]. \\
\end{array} \right.$$
Thus $x(t)$ is a solution of the equation (\ref{eq:limit int}) that can be presented in the following form:
\begin{equation*}
\label{eq: Ito}
x(t)=x^0+\int_{(a,t]} f(s,x(s-))dL(s).
\end{equation*}

\item
$h_n = o(1/n)$, what means that measure $\mu$, generated by $\sigma$, is equal to the Lebesgue measure. Hence equation (\ref{eq: phi}) can be written as follows
\begin{equation*}
 \begin{split}
  &\frac{\partial \varphi(z,x,u)}{\partial u} = z (\varphi(z, x, u))\\
  &\varphi(z,x,0)=x.
 \end{split}
\end{equation*}
Thus $x(t)$ is a solution of the equation (\ref{eq:limit int}) with the function $\varphi$ defined above.
\end{enumerate}
\end{cor}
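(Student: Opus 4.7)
\medskip

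\noindent\textbf{Proof plan.} The whole argument rests on a single reduction that is special to the ``simplest type'' $\delta$-sequence. With $\rho_n(s)=n\rho(ns)$ one has
\begin{equation*}
F_n(x)=\int_x^{1/n}n\rho(ns)\,{\mathrm d}s=F(nx),\qquad F(y):=\int_y^{1}\rho(s)\,{\mathrm d}s,
\end{equation*}
and therefore $F_n^{-1}(u)=F^{-1}(u)/n$ under the same $\sup$-convention. Consequently
\begin{equation*}
F_n\bigl(F_n^{-1}(u)-\delta h_n\bigr)=F\bigl(F^{-1}(u)-\delta\,(n h_n)\bigr),
\end{equation*}
so that the entire limit behaviour is governed by the single sequence $nh_n$.

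First I would prove the first ``if and only if''. If $nh_n\to\infty$ then $\delta\cdot nh_n\to\infty$ for every $\delta\in(0,1)$, the argument of $F$ tends to $-\infty$, and $F(-\infty)=1$; combined with $F_n^{-1}(0)=+\infty$ this gives $\sigma(u)=\mathbf{1}_{(0,1]}(u)$, which is in $G$ and independent of $\delta$. If $nh_n\to 0$, then $\delta\cdot nh_n\to 0$, the argument of $F$ converges to $F^{-1}(u)$, so $\sigma(u)=F(F^{-1}(u))=u$, again in $G$ and independent of $\delta$. For the converse, if along some subsequence $n_k h_{n_k}\to c\in(0,\infty)$ then the candidate limit is $u\mapsto F(F^{-1}(u)-c\delta)$; since $\rho\geq 0$ with $\int\rho=1$ and $\rho\not\equiv 0$, the function $F$ is strictly decreasing on a subinterval of $[0,1]$, so this candidate depends nontrivially on $\delta$, contradicting the assumption. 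An oscillation of $nh_n$ that avoids both $0$ and $\infty$ similarly produces different subsequential limits.

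For the second ``if and only if'' I would feed the two admissible $\sigma$'s into Theorem \ref{theorem:suficient} and Theorem \ref{theorem:necessary}. In case $1/n=o(h_n)$ the limit $\sigma=\mathbf{1}_{(0,1]}$ generates the point mass $\mu=\delta_0$; the fixed point equation (\ref{eq: phi}) is then solved explicitly by $\varphi(z,x,0)=x$ and $\varphi(z,x,u)=x+z(x)$ for $u>0$. Substituting this into (\ref{eq:limit int}) collapses each jump term to $\Delta L(s)f(s,x(s-))$, and since $L^c$ is continuous one may freely replace $x(s)$ by $x(s-)$ in the continuous integral, yielding the It\^o-type equation stated in the corollary. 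In case $h_n=o(1/n)$ the limit $\sigma(u)=u$ generates Lebesgue measure on $[0,1]$, turning (\ref{eq: phi}) into an integral version of the ODE $\partial_u\varphi=z(\varphi)$, $\varphi(0)=x$, as claimed. Conversely, if $x_n$ converges in $\mathbf{L_1(\mathbf{T})}$ and $L$ is discontinuous, Theorem \ref{theorem:necessary} forces the convergence of $F_n(F_n^{-1}(u)-\delta h_n)$ to some $\sigma\in G$ (independent of $\delta$ at continuity points), which by the first part implies $nh_n\to 0$ or $nh_n\to\infty$; if $L$ is continuous, Proposition \ref{statement:L^c} shows that both alternatives yield the same limit $x^0+\int_a^t f(s,x(s))\,{\mathrm d}L(s)$, so the statement is trivially consistent.

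The only delicate point is the converse half of the first ``iff'', where one must exclude the intermediate regime $nh_n\to c\in(0,\infty)$ (or non-convergent $nh_n$). The content there is not the computation but the verification that $F$ is genuinely non-constant on the relevant range of $F^{-1}$, which is immediate from $\int_0^1\rho=1$ but should be spelled out so that ``dependence on $\delta$'' is witnessed at a continuity point of the candidate limit. Everything else amounts to substituting the two explicit forms of $\sigma$ into the two theorems and simplifying.
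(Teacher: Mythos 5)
Your proposal is correct and follows exactly the route the corollary is meant to be read by: the paper states it without a separate proof, as a direct consequence of Theorems \ref{theorem:suficient} and \ref{theorem:necessary} once one observes the scaling identity $F_n(F_n^{-1}(u)-\delta h_n)=F(F^{-1}(u)-\delta\, nh_n)$ for the simplest-type sequence, which is precisely your key reduction. The two limit computations ($\sigma=\mathbf{1}_{(0,1]}$ giving $\mu=\delta_0$, and $\sigma(u)=u$ giving Lebesgue measure), the exclusion of the intermediate regime via strict monotonicity of $F$ on the support of $\rho$, and the appeal to the two main theorems for the second equivalence all match the intended argument.
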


% ------------------------------------------------------------------------

% \subsection*{Acknowledgement}
% Many thanks to all who managed to get through this article.

% ------------------------------------------------------------------------
\end{document}